\newcommand{\field}[1]{\mathbb{#1}}
\newcommand{\R}{\field{R}}
\newcommand{\Var}{\mathrm{Var}}
\newcommand{\supp}{\mathrm{supp}}
\newtheorem{theorem}{Theorem}
\newtheorem{problem}{Open Problem}
\newtheorem{proposition}{Proposition}
\newtheorem{corollary}{Corollary}
\newtheorem{remark}{Remark}
\newtheorem{assumption}{Assumption}
\numberwithin{equation}{section}
\numberwithin{equation}{section}
\title{\bf Local  Functional Inequalities in One Dimensional Free Probability}
\author{Ionel Popescu}
\address{School of Mathematics \\
         Georgia Institute of Technology \\
         Atlanta, GA 30332-0160, USA \newline
 and \newline
Institute of Mathematics of Romanian Academy \\
21, Calea Grivitei Street\\
010702-Bucharest, Sector 1, Romania}         
\email{ipopescu@math.gatech.edu}
\thanks{ This work was supported by a grant of the Romanian National Authority for Scientific Research, CNCS Ð UEFISCDI, project number PN-II-RU-TE-2011-3-0259.  The author was also partially supported by Marie Curie Action grant nr. 249200 }
\date{}
\begin{document}

\maketitle

\begin{abstract}

In this note we  introduce and prove local and potential independent transportation,  Log-Sobolev and HWI inequalities in one dimensional free probability on compact intervals which are sharp.   We recover using this approach a free transportation inequality on the whole real line which was put forward recently by M. Maida and E. Maurel-Segala in \cite{MS}.  

Our method is bases on the operator theoretic approach developed in \cite{LP2} to deal with the free Poincar\'e's inequality.  

\end{abstract}

\section{Introduction}

There is a large literature on functional inequalities in the classical case.   Some of these inequalities apply to the case of  random matrices and produce, as the dimension grows to infinity,  interesting functional inequalities in the limit.   The main connection between the random matrices and  free probability is  due to the main result of Voiculescu \cite{Voi1, Voi2} which states that large random matrices become asymptotically free as the dimension grows to infinity.  This is a very rich bridge from free probability to random matrices and back.  

Among the classical counterparts of classical inequalities, we mention the transportation which was first discussed in \cite{BV} for the quadratic potentials and was inspired by the work of Otto and Villani \cite{OV}.    We  describe now the statement of this inequality as it is an important point in the economy of this note.      

For a given potential $V:\R\to\R$, the logarithmic energy with external field $V$ of a probability measure on $\R$ is defined by 
\[
E_{V}(\mu)=\int Vd\mu-\iint \log|x-y|\mu(dx)\mu(dy).  
\]
It is a standard result (cf. \cite{ST}) that under some mild growth of $V$, there is a unique probability measure $\mu_{V}$, which minimizes the functional $E_{V}$.   If we set $E_{V}(\mu|\mu_{V})=E_{V}(\mu)-E_{V}(\mu_{V})$ this plays the analog of the entropy in the classical case.  The transportation inequality associated to $V$ states that there is a positive $\rho$ such that for any compactly supported probability measure $\mu$ 
\begin{equation}\label{e0:1}
\rho W_{2}^{2}(\mu,\mu_{V})\le E_{V}(\mu|\mu_{V})   
\end{equation}
where $W_{2}(\mu,\nu)$ is the Wasserstein distance based on quadratic cost function given by 
\[
W_{2}(\mu,\nu)=\left\{\inf_{\pi\in\Pi(\mu,\nu)}\int |x-y|^{2}\pi(dx\,dy)\right\}^{1/2}
\]
for measures $\mu,\nu$ of finite second moment.   This was first proved for the case of $V(x)=x^{2}/2$ by Biane and Voiculescu in \cite{BV} using complex Burger's equation  and then for the case of $V(x)-\rho x^{2}$ convex in \cite{HPU1} using random matrices.  Yet, another direct approach is using tools from mass transportation tools and is given in \cite{popescu, LP}.    

Another classical inequality which found a natural analog in the free probability world is the Log-Sobolev which was introduced in a certain form by Voiculescu in \cite{Voi5} and then proved to be equivalent to the one which is most common now by Bianne and Speicher in \cite{B3}.   With the notation from above, it states that there is a positive $\rho$ such that for any probability measure $\mu$, 
\begin{equation}\label{e0:2}
4\rho E_{V}(\mu|\mu_{V})\le I(\mu|\mu_{V})
\end{equation}
where 
\[
I(\mu|\mu_{V})=\int (H\mu-V')^{2}d\mu \text{ with } H\mu(x)=\begin{cases} p.v.\int\frac{2}{x-y}\mu(dy) & \frac{d\mu}{dx}\in L^{3}(\R)  \\ +\infty & \text { otherwise }\end{cases}
\]
where the integral in the definition of the Hilbert transform $H\mu$ is in the principal value sense.     This inequality has received a random matrix proof in \cite{B2} for the case of the case of $V(x)-\rho x^{2}$ is convex and then using tools from the mass transportation in \cite{LP}.   

Notice that so far these inequalities require some convexity on the potential $V$.   A natural question is to ask if there is a transportation or Log-Sobolev without the convexity assumption on $V$.  For the transportation case there is a version put forward recently by M. Ma\"ida and E. Maurel-Segala in \cite{MS} in which the main condition on $V$ is a quadratic growth at infinity and the base metric $W_{2}$ is replaced by $W_{1}$, a weaker metric. They use this to say something about the concentration of the empirical distribution of eigenvalues of random matrices with general potentials.  

For the Log-Sobolev case, without convexity assumption, the statement from \eqref{e0:2} can not be true as it is.  What is the natural replacement of \eqref{e0:2} if we drop the convexity on $V$ is not clear. 

At one end is the case of (strong) convex potentials where both \eqref{e0:1} and \eqref{e0:2} are well understood but as soon as we loose the convexity property, the inequalities in discussion become problematic.   At the other end of the spectrum is the case of these inequalities which are in fact potential independent.   It is this topic which is under investigation here.   

Now we describe a little bit the main results and how the paper is organized.    

To formulate the question clearly, the first change is that in place of the entropy $E_{V}(\mu|\mu_{V})$ we use a very closely related quantity which for any two probability measures $\mu,\nu$, is given by
\[
\mathcal{H}(\mu,\nu)=-\iint \log|x-y|(\mu-\nu)(dx)(\mu-\nu)(dy). 
\]
As one can see, this is independent of the potential $V$, but it is not really very different from $E_{V}(\mu|\mu_{V})$ (see for instance, \eqref{e2:8} below).    

One of the main results of this note is that for probability measures supported on $[-2,2]$, 
\begin{equation}\label{e0:3}
W_{1}^{2}(\mu,\nu)\le 2\mathcal{H}(\mu,\nu)
\end{equation}
where the inequality is actually sharp.  This inequality can be seen as a local version of the transportation inequality which is at the same time potential independent.  By scaling,  this can be extended to probability measures on any compact interval, and thus it can be really interpreted as some form of universal transportation inequality.     This is treated in Section~\ref{s:1}.

In Section~\ref{s:2} we show that the metric $W_{1}$ is optimal in \eqref{e0:3} and can not be replaced by any other $W_{p}$ with $p>1$.   

The interesting fact is that now if we take a potential $V$, with at least quadratic growth at infinity, then we can actually turn the local version of \eqref{e0:3} into a global transportation inequality which states that for some $C>0$ and any probability measure  $\mu$ on $\R$,  
\[
C W_{1}^{2}(\mu,\mu_{V})\le E_{V}(\mu|\mu_{V}).   
\]
which is the result from \cite{MS}.   This is the content of Section~\ref{s:3}.   We should point out that the approach of M. Maida and E. Maurel-Segal from \cite{MS} to prove this result uses some random matrices where here we do not appeal to any of that.   The idea we use here is borrowed from the mass transportation techniques to combine the local transportation with the growth of $V$ at infinity.       

Section~\ref{s:4} is dedicated to a local version of the Log-Sobolev.   The first thing we need to set properly is the analog of the Fisher information, $I_{V}(\mu|\mu_{V})$.   When restricted to $[-2,2]$, the version we propose is the following 
\[
\mathcal{J}(\mu,\nu)=\begin{cases}
\int (H\mu-H\nu)^{2}d\alpha &\text{ if } H\mu,H\nu\in L^{2}(\alpha)  \\
+\infty &\text{otherwise}
\end{cases}
\]
where $\alpha$ is the semicircle law on $[-2,2]$ and $H\mu$ is the Hilbert transform of the measure $\mu$.  One of the main results in this section is that 
\[
\mathcal{J}(\mu,\nu)=\begin{cases} 
2\int \left( \frac{d\mu}{d\beta}-\frac{d\nu}{d\beta} \right)^{2}d\beta, & \text{ if }\quad \frac{d\mu}{d\beta},\frac{d\nu}{d\beta}\in L^{2}(\beta) \\
+\infty &\text{otherwise} 
\end{cases} 
\]
with $\beta$ being the arcsine law on $[-2,2]$.  This last equality is nothing but the interesting property of the Hilbert transform which says that the map 
\[
H_{\beta}:L^{2}(\beta)\to L^{2}(\alpha)\quad H_{\beta}\phi=H(\phi\,d\beta)
\]
is an isometry up to a multiplication by a constant.   Using these properties, we prove the following local version of the free Log-Sobolev.  For any probability measures on $[-2,2]$, 
\begin{equation}\label{e0:5}
2\mathcal{H}(\mu,\nu)\le \mathcal{J}(\mu,\nu).
\end{equation}
As in the case of local transportation this turns out to be sharp.   

We continue this discussion in Section~\ref{s:6} of the local Log-Sobolev in which the $L^{2}$ norm of $H\mu-H\nu$ from the definition of $\mathcal{J}$ above is  replaced by the square of the $L^{p}$ norm with $1<p<2$.   It is shown that if such a Log-Sobolev holds true, then necessarily $3/2\le p$ but it is posted as an open problem if $p=3/2$ is the optimal threshold for which the inequality is satisfied.   At any rate, even though $p=3/2$ does not produce an $L^{p}$ version of the Log-Sobolev, it is still natural to look for the smallest which does produce such an inequality.   

Finally in Section~\ref{s:5} we discuss a version of the celebrated Otto-Villani HWI inequality which links together $W_{1}$, $\mathcal{H}$ and $\mathcal{J}$.  This is a refinement of the Log-Sobolev inequality.   

On the technical side, the main tools we use here are borrowed from the operator theoretical approach to the free Poincar\'e inequality put forward in \cite{LP} and, as we already mentioned, for the global version of transportation inequality we employ some tools from the classical mass transportation.

\section{Potential Independent Transportation Inequality on $[-L,L]$}\label{s:1}

We will treat here essentially the case of measures on $[-2,2]$.  The case of measures on $[-L,L]$ following by simple scaling.  

Given a $p\ge1$ and two measures $\mu,\nu$ on the real line such that $\int |x|^{p}\,\mu(dx)$ and $\int |x|^{p}\,\nu(dx)$ are both finite, we define 
\begin{equation}\label{e1:W}
W_{p}(\mu,\nu)=\left\{\inf_{\pi\in\Pi(\mu,\nu)}\int |x-y|^{p}\pi(dx\,dy)\right\}^{1/p}
\end{equation}
where here $\Pi(\mu,\nu)$ denotes the set of probability measures on $\R^{2}$ with marginals $\mu,\nu$.   $W_{p}$ is a metric for the weak topology on the set of probability measures with $p$th finite moment.  

We will be interested in $W_{1}$ which can also be characterized as
\begin{equation}\label{e1:1}
W_{1}(\mu,\nu)=\sup\left\{ \int g\,d(\mu-\nu),|g(x)-g(y)|\le |x-y|\right\}. 
\end{equation}
$W_{1}$ is a distance for the topology of weak convergence of probability measures with finite first moment. 

Another description of the distance $W_{p}$ is given by the following (see for example \cite[page 75]{Vi2}).  If $\mu$, $\nu$ are two probability measures on $\R$ such that $\nu$ does not have atoms, then there is a unique non-decreasing map $\theta$ such that $\theta_{\#}\nu=\mu$ (i.e. $\mu(A)=\nu(\theta^{-1}(A))$).  In addition,  
\begin{equation}\label{e1:0}
W_{1}(\mu,\nu)=\int|\theta(x)-x|\nu(dx).
\end{equation}

Next we define the \emph{free reduced relative entropy} of two compactly supported measures $\mu,\nu\in\mathcal{P}(\R)$ to be given by 
\begin{equation}\label{e1:2}
\mathcal{H}(\mu,\nu)=-\iint \log|x-y|(\mu-\nu)(dx)(\mu-\nu)(dy).  
\end{equation}
It is well known \cite[Lemma 1.8]{ST} or \cite[Lemma 6.41]{Deift1} that $\mathcal{H}(\mu,\nu)\ge0$ with equality if and only if $\mu=\nu$.    For the integrability properties see a detailed discussion in \cite[page 142]{Deift1} the only thing we point out here being that $\mathcal{H}(\mu,\nu)$ is finite if and only if
\[
\iint |\log|x-y||\mu(dx)\mu(dy)<\infty\text{ and }\iint |\log|x-y||\nu(dx)\nu(dy)<\infty.  
\]
If either of these conditions fail, we set $\mathcal{H}(\mu,\nu)=+\infty$.  

For measures on the interval $[-2,2]$, the reduced relative entropy can be well understood in terms of the operator structure associated to the logarithmic potential of measures on $[-2,2]$.  To this matter we recall here some of the main results discussed in \cite{LP2} which were put forward in order to deal with the free Poincar\'e inequality.  

We will work with the following reference measures on $[-2,2]$
\[
\alpha(dx)=\frac{\sqrt{4-x^{2}}}{2\pi}dx, \quad \text{ and }\quad \beta(dx)=\frac{dx}{\pi\sqrt{4-x^{2}}}.  
\]
Most of the action takes place around the arcsine measure $\beta$ and we will use $\langle, \rangle$ to denote the inner product in $L^{2}(\beta)$.   In the sequel we will use the following notation 
\begin{equation}\label{e1:30}
\phi_{n}(x)=T_{n}\left(\frac{x}{2} \right) \quad
    \text{ and } \quad \psi_{n}(x)=U_{n}\left(\frac{x}{2} \right) \quad \text{ for }n\ge0.
\end{equation}
where 
$T_n(x)$, the {\em Chebychev polynomials of the first kind}, are defined by $T_n(\cos\theta)=\cos(n\theta)$ and $U_{n}$, the \emph{Chebyshev polynomials of second kind}, are described by  $U_{n}(\cos \theta)=\frac{\sin (n+1)\theta}{\sin\theta}$.    Adjusting a little bit the polynomials $T_{n}$ as $\tilde{T}_{0}=T_{0}$ and $\tilde{T}_{n}(x)=\sqrt{2}T_{n}(x)$, then it is easy to see that $\{\tilde{T}_{n}(x/2)\}$ form an the orthonormal basis for $L^{2}(\beta)$.   Similarly,  $U_{n}(x/2)$ form an orthonormal basis for $L^{2}(\alpha)$.   Another relation which plays an important role here is  
\begin{equation}\label{e1:5}
\phi_{n}'=\frac{n}{2}\psi_{n-1}.  
\end{equation}

First, we introduce the operators $\mathcal{E,N,L}$ on $C^{2}$ functions on $[-2,2]$ as follows.  Given a $C^{2}$ function $\phi:[-2,2]\to\R$, set
\begin{equation}\label{e:EN}
\begin{split}
(\mathcal{E}\phi)(x)&=-\int \log|x-y|\phi(y)\beta(dy) , \\
(\mathcal{N}\phi)(x)&=\int y\phi'(y)\beta(dy) 
+x\int \phi'(y)\beta(dy)
- (4-x^{2})\int \frac{\phi'(x)-\phi'(y)}{x-y} \,  \beta(dy) \\ 
(\mathcal{L}\phi)(x)&=-(4-x^{2})\phi''(x)+x\phi'(x).
\end{split}
\end{equation}

For convenience in what follows we will use the space 
\[
K=\{ f\in L^{2}(\beta): \int f\,d\beta=0 \}
\]
which is the orthogonal to constants in $L^{2}(\beta)$.   The reason we single out this space is that the operators $\mathcal{N}$ and $\mathcal{E}$ (properly extended) are the inverse of each other.   

Now we summarize the main properties presented in \cite[Proposition 1]{LP2} and needed in this note.  

\begin{proposition}\label{p:1}
\begin{enumerate}
\item $\mathcal{E}$ sends $C^{2}([-2,2])$ into $C^{2}([-2,2])$ and can be extended to a bounded selfadjoint  operator from $L^{2}(\beta)$ into itself.   
\item For any $C^{2}$ function $\phi \in K$, 
\begin{equation}\label{ep:6}
\begin{split}
\mathcal{E}\mathcal{N}\phi &= \phi, \\
\mathcal{N}\mathcal{E}\phi &=\phi.
\end{split}
\end{equation}
\item In addition $\mathcal{E}\phi_{0}=0$, while for $n\ge1$, $\mathcal{E}\phi_{n}=\frac{1}{n}\phi_{n}$ and   $\mathcal{N}\phi_{n}=n\phi_{n}$ for any $n\ge0$.  In other words, $\mathcal{N}$ is the counting number operator for the Chebyshev basis in $L^{2}(\beta)$.
\item $\mathcal{N}$ can be canonically extended to a selfadjoint operator on $L^{2}(\beta)$ which restricted to $K$ has inverse $\mathcal{E}$.  
\item $\mathcal{L}=\mathcal{N}^{2}$ and it satisfies for any $C^{2}$ functions $\phi,\psi\in L^{2}(\beta)$, 
\begin{equation}\label{ep:8}
\langle \mathcal{L}\phi,\psi\rangle=2\int \phi' \psi'd\alpha.
\end{equation}
\end{enumerate}
\end{proposition}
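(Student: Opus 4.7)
The plan is to diagonalize $\mathcal{E}$, $\mathcal{N}$, and $\mathcal{L}$ simultaneously on the orthonormal Chebyshev basis $\{\tilde{T}_n(x/2)\}_{n\ge 0}$ of $L^2(\beta)$; all five items then follow either from the spectral picture or from a single integration by parts (for the weak formula in (5)).

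For the spectral identities in (3) I would pass to angle variables $x=2\cos\theta$, $y=2\cos\phi$, so that $\beta$ becomes Lebesgue measure $d\phi/\pi$ on $[0,\pi]$ and $\phi_n(x)=\cos(n\theta)$. The Fourier expansion
\[
\log|2\cos\theta-2\cos\phi| \;=\; -\sum_{k\ge 1}\frac{2\cos(k\theta)\cos(k\phi)}{k},
\]
integrated termwise against $d\phi/\pi$, gives $\mathcal{E}\phi_0=0$ and $\mathcal{E}\phi_n=(1/n)\phi_n$ for $n\ge 1$. The identity $\mathcal{L}\phi_n=n^2\phi_n$ is just the classical Chebyshev ODE $(1-u^2)T_n''(u)-uT_n'(u)+n^2T_n(u)=0$ with $u=x/2$. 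For $\mathcal{N}\phi_n=n\phi_n$ I would first simplify $\mathcal{N}$: the algebraic identity $(4-x^2)/(x-y)=(4-y^2)/(x-y)-(x+y)$ together with the p.v.\ vanishing $\int\beta(dy)/(x-y)=0$ on $(-2,2)$ (boundary value of the Cauchy transform of the arcsine) collapses the three-piece formula to
\[
\mathcal{N}\phi(x) \;=\; 2\,\text{p.v.}\!\int\frac{\phi'(y)}{x-y}\,\alpha(dy).
\]
Substituting $\phi_n'=(n/2)\psi_{n-1}$, passing to angle variables, using $2\sin(n\phi)\sin\phi=\cos((n-1)\phi)-\cos((n+1)\phi)$, and invoking the Tricomi identity $\text{p.v.}\!\int_0^\pi \cos(k\phi)/(\cos\theta-\cos\phi)\,d\phi=-\pi U_{k-1}(\cos\theta)$ for $k\ge 1$ then yields $\text{p.v.}\!\int \psi_{n-1}(y)(x-y)^{-1}\alpha(dy)=\phi_n(x)$, hence $\mathcal{N}\phi_n=n\phi_n$.

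Items (1), (2), (4) now follow from the spectral picture: since $\{\tilde T_n(x/2)\}$ is an $L^2(\beta)$-orthonormal basis, $\mathcal{E}$ extends to a bounded selfadjoint operator with spectrum $\{0\}\cup\{1/n\}_{n\ge 1}$, and $\mathcal{N}$ extends to a selfadjoint (unbounded) operator on $\{\sum c_n\tilde T_n:\sum n^2|c_n|^2<\infty\}$ with $\mathcal{N}\mathcal{E}=\mathcal{E}\mathcal{N}=\mathrm{Id}$ on $K$. The $C^2\!\to\! C^2$ assertion for $\mathcal{E}$ comes from differentiating twice under the integral and noting that $(\mathcal{E}\phi)'=-H(\phi\beta)$ inherits continuity from Plemelj-type estimates on a compact interval for $C^1$ densities. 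The equalities \eqref{ep:6} on all of $C^2\cap K$ follow from uniform convergence of the Chebyshev expansion of a $C^2$ function together with agreement on each $\phi_n$. For (5), $\mathcal{L}=\mathcal{N}^2$ is immediate on the basis (both act by $n^2$), and the weak formula follows from the identity
\[
\frac{d}{dx}\!\left[\frac{\sqrt{4-x^2}}{\pi}\phi'(x)\right] \;=\; -\mathcal{L}\phi(x)\cdot\beta(x),
\]
so that integrating by parts in $\int\mathcal{L}\phi\cdot\psi\,d\beta$ produces $2\int\phi'\psi'\,d\alpha$, the boundary term being killed by the vanishing of $\sqrt{4-x^2}$ at $\pm 2$.

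The main obstacle, in my view, is the direct verification that $\mathcal{N}\phi_n=n\phi_n$: the three-piece definition of $\mathcal{N}$ is ungainly, and the naive attempt to deduce this from $\mathcal{E}\mathcal{N}\phi_n=\phi_n$ is circular. The route above succeeds only after one recognizes the hidden simplification $\mathcal{N}=2H_\alpha\circ (d/dx)$, which requires both the correct algebraic rearrangement of $(4-x^2)/(x-y)$ and the fact that the Cauchy transform of $\beta$ has zero real part on $(-2,2)$. Once this reduction is in hand, the Tricomi calculation is routine and the rest of the proposition organizes itself around the common eigenbasis.
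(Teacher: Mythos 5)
The paper does not prove Proposition~\ref{p:1}; it explicitly quotes it from \cite[Proposition 1]{LP2}, so there is no in-paper argument against which to compare your proposal. Judged on its own terms, your proof is correct, and from the way the paper summarizes \cite{LP2} (``$\mathcal{N}$ is the counting number operator for the Chebyshev basis'') it is evidently the same route: diagonalize $\mathcal{E}$, $\mathcal{N}$, $\mathcal{L}$ simultaneously in the basis $\phi_n$. Your key computation is exactly the nontrivial step: the algebraic rearrangement $(4-x^2)/(x-y)=(4-y^2)/(x-y)-(x+y)$ combined with $\mathrm{p.v.}\!\int\beta(dy)/(x-y)=0$ on $(-2,2)$ collapses the three-piece formula to $\mathcal{N}\phi(x)=2\,\mathrm{p.v.}\!\int\phi'(y)(x-y)^{-1}\alpha(dy)$, after which the product-to-sum identity and the Glauert--Tricomi integral give $\mathcal{N}\phi_n=n\phi_n$. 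The Fourier expansion of $\log|2\cos\theta-2\cos\varphi|$ for $\mathcal{E}\phi_n=\phi_n/n$, the Chebyshev ODE for $\mathcal{L}\phi_n=n^2\phi_n$, the spectral deductions for (1), (2), (4), and the integration by parts for \eqref{ep:8} (using $\frac{d}{dx}\big[\tfrac{\sqrt{4-x^2}}{\pi}\phi'\big]=-(\mathcal{L}\phi)\,\beta$ and the vanishing of $\sqrt{4-x^2}$ at the endpoints) are all correct.

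Two small imprecisions are worth flagging. First, the justification that $\mathcal{E}$ maps $C^2$ into $C^2$ via ``Plemelj-type estimates for $C^1$ densities'' is not quite apt: the density of $\phi\,\beta$ is $\phi(y)/(\pi\sqrt{4-y^2})$, which is unbounded at $\pm 2$. The cleaner route is the same device you use for $\mathcal{N}$: since $\mathrm{p.v.}\!\int\beta(dy)/(x-y)=0$, one may write $(\mathcal{E}\phi)'(x)=\int\frac{\phi(x)-\phi(y)}{x-y}\,\beta(dy)$, and the divided-difference kernel is $C^1$ in $(x,y)$ for $\phi\in C^2$, so a further differentiation under the (finite) measure $\beta$ is legitimate. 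Second, the termwise application of $\mathcal{N}$ to the Chebyshev series of a $C^2$ function in item (2) deserves a sentence: the angle-variable Fourier coefficients $a_n$ of a $C^2$ function on $[-2,2]$ are $O(n^{-2})$, so $\sum n a_n\phi_n$ converges in $L^2(\beta)$ and lies in the domain of $\mathcal{E}$, which makes $\mathcal{E}\mathcal{N}\phi=\phi$ and $\mathcal{N}\mathcal{E}\phi=\phi$ unambiguous. Neither of these is a gap in the strategy; both are places where the write-up should be slightly more careful.
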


In what follows, a key role is played but he following Corollary. 
\begin{corollary}\label{c:1}
For any $C^{2}$ functions $\phi,\psi\in K$, 
\begin{equation}\label{e1:11}
\langle \phi,\psi \rangle=2\int  (\mathcal{E}^{2}\phi)'\psi'\,d\alpha=2\int (\mathcal{E}\phi)'(\mathcal{E}\psi)'\,d\alpha.
\end{equation}
\end{corollary}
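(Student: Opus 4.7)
The plan is to reduce both equalities to the integration-by-parts formula \eqref{ep:8}, namely $\langle \mathcal{L}\phi,\psi\rangle = 2\int \phi'\psi'\,d\alpha$, by exploiting the operator-theoretic structure of Proposition~\ref{p:1}. The key algebraic observation is that on $K$ the operators $\mathcal{N}$ and $\mathcal{E}$ are mutual inverses (parts~(2) and~(4)) and $\mathcal{L}=\mathcal{N}^{2}$ (part~(5)). So applying two factors of $\mathcal{E}$ to $\phi$ and then the Laplace-type operator $\mathcal{L}$ returns $\phi$ itself, which converts $\langle \phi,\psi\rangle$ into the right hand side of \eqref{ep:8} with $\mathcal{E}^{2}\phi$ in the first slot.

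For the first equality I would first note that $\mathcal{E}$ preserves $K$: this is visible from the eigenfunction description in part~(3), since $\mathcal{E}\phi_{0}=0$ and $\mathcal{E}\phi_{n}=\phi_{n}/n$ for $n\ge 1$, so $\mathcal{E}$ leaves the orthogonal complement of the constants invariant. Part~(1) keeps everything in $C^{2}$, so $\mathcal{E}^{2}\phi$ is a $C^{2}$ element of $K$. Iterating $\mathcal{N}\mathcal{E}=I$ on $K$ twice then gives $\mathcal{L}\mathcal{E}^{2}\phi = \mathcal{N}^{2}\mathcal{E}^{2}\phi = \phi$. Substituting this into $\langle \phi,\psi\rangle$ and applying \eqref{ep:8} with $\mathcal{E}^{2}\phi$ in the first slot yields exactly $2\int (\mathcal{E}^{2}\phi)'\psi'\,d\alpha$.

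For the second equality I would apply \eqref{ep:8} directly to the $C^{2}$ pair $(\mathcal{E}\phi,\mathcal{E}\psi)$, getting
\[
2\int (\mathcal{E}\phi)'(\mathcal{E}\psi)'\,d\alpha = \langle \mathcal{L}\mathcal{E}\phi,\mathcal{E}\psi\rangle = \langle \mathcal{N}\phi,\mathcal{E}\psi\rangle,
\]
where in the last step I use $\mathcal{L}\mathcal{E}=\mathcal{N}^{2}\mathcal{E}=\mathcal{N}$ on $K$. Then I slide $\mathcal{N}$ across via its selfadjointness (part~(4)) and finish with $\mathcal{N}\mathcal{E}\psi=\psi$ to reach $\langle \phi,\psi\rangle$. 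There is no real obstacle; the only technical point to monitor is that at each stage the functions to which $\mathcal{N}$, $\mathcal{L}$, or \eqref{ep:8} is applied are $C^{2}$ and, where needed, lie in $K$—both of which are automatic from parts~(1) and~(3) of Proposition~\ref{p:1}.
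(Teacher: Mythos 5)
Your proof is correct and follows essentially the same route as the paper's: the paper simply says the result follows from \eqref{ep:8} with $\phi$ replaced by $\mathcal{E}^{2}\phi$ together with the fact that $\mathcal{E}$ and $\mathcal{N}$ are mutual inverses on $K$, and your argument just spells out that algebra (including the selfadjointness step for the second equality) in detail.
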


\begin{proof}
This follows from \eqref{ep:8} with $\phi$ replaced by $\mathcal{E}^{2}\phi$ and the fact that $\mathcal{E}$ and $\mathcal{N}$ are the inverse of each other.  
\end{proof}

The first main result of this note is the following.

\begin{theorem}\label{t:1}
For any two probability measures $\mu,\nu$ on $[-2,2]$, 
\begin{equation}\label{e1:13}
W_{1}^{2}(\mu,\nu)\le 2\mathcal{H}(\mu,\nu).  
\end{equation}
Equality is attained for any two probability measures $\mu,\nu$ such that $\mu(dx)-\nu(dx)=cx\beta(dx)$ for some constant $c$.
\end{theorem}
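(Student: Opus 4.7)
The plan is to combine the Kantorovich--Rubinstein dual representation of $W_1$ from \eqref{e1:1} with the Chebyshev diagonalization of the logarithmic kernel. Using $x=2\cos\theta$ and the classical Fourier identity $\sum_{k\ge1}\cos(k\eta)/k=-\log|2\sin(\eta/2)|$, one obtains
\[
-\log|x-y|=\sum_{k\ge1}\frac{2}{k}\,\phi_k(x)\phi_k(y),\qquad x,y\in[-2,2].
\]
Integrating against $(\mu-\nu)\otimes(\mu-\nu)$ and applying Fubini (legitimate whenever $\mathcal{H}(\mu,\nu)<\infty$, which is the only case with content) yields
\[
\mathcal{H}(\mu,\nu)=\sum_{k\ge1}\frac{2\,c_k^2}{k},\qquad c_k:=\int\phi_k\,d(\mu-\nu),
\]
and note that $c_0=0$ since $\mu-\nu$ has total mass zero.

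By \eqref{e1:1} it suffices to bound $\bigl|\int g\,d(\mu-\nu)\bigr|^2\le 2\mathcal{H}(\mu,\nu)$ for every $1$-Lipschitz $g$ on $[-2,2]$. I would expand $g$ in the orthonormal Chebyshev basis $\{\widetilde T_n(\cdot/2)\}_{n\ge 0}$ of $L^2(\beta)$, write $g=b_0+\sum_{n\ge 1}b_n\widetilde T_n(\cdot/2)$, and set $a_n:=\sqrt{2}\,c_n$, so that $\mathcal{H}(\mu,\nu)=\sum_{n\ge 1}a_n^2/n$. The constant mode contributes nothing to $\int g\,d(\mu-\nu)$, so $\int g\,d(\mu-\nu)=\sum_{n\ge 1}b_n a_n$, and a single weighted Cauchy--Schwarz pairing $\sqrt{n}\,b_n$ against $a_n/\sqrt{n}$ gives
\[
\left|\int g\,d(\mu-\nu)\right|^2\le\Bigl(\sum_{n\ge 1}n\,b_n^2\Bigr)\,\mathcal{H}(\mu,\nu).
\]
To close the argument, property (5) of Proposition~\ref{p:1} applied to $g$ (using $\mathcal{L}=\mathcal{N}^2$ and $\mathcal{N}\phi_n=n\phi_n$) yields
\[
\sum_{n\ge 1}n^2 b_n^2=\langle\mathcal{L}g,g\rangle=2\int(g')^2\,d\alpha\le 2\|g'\|_\infty^2\le 2,
\]
because $g$ is $1$-Lipschitz and $\alpha$ is a probability measure. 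Since $n\le n^2$ for $n\ge 1$, $\sum_{n\ge 1}n\,b_n^2\le\sum_{n\ge 1}n^2 b_n^2\le 2$, yielding $W_1^2\le 2\mathcal{H}$.

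Sharpness is a short direct check. For $\mu-\nu=c\,x\,\beta(dx)$, the identity $\phi_1(x)=x/2$ and orthogonality of $\{\phi_n\}$ in $L^2(\beta)$ give $c_n=c\,\delta_{n,1}$, so $\mathcal{H}(\mu,\nu)=2c^2$; meanwhile $F_\mu(x)-F_\nu(x)=-(c/\pi)\sqrt{4-x^2}$, hence $W_1(\mu,\nu)=\int|F_\mu-F_\nu|\,dx=2|c|$, so $W_1^2=4c^2=2\mathcal{H}$. I foresee no substantive obstacle: the proof is essentially a weighted Cauchy--Schwarz in the basis that simultaneously diagonalizes $\mathcal{H}$ (via the kernel expansion) and $\|g'\|_{L^2(\alpha)}$ (via $\mathcal{L}=\mathcal{N}^2$), with the Lipschitz hypothesis entering through the trivial pointwise bound $\|g'\|_\infty\le 1$ and the crucial numerical fact $n\le n^2$ on positive integers. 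The only mildly technical item is the Fubini/Parseval justification for the series representation of $\mathcal{H}$, which is automatic when $\mathcal{H}<\infty$; the case $\mathcal{H}=+\infty$ makes \eqref{e1:13} vacuous.
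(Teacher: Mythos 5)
Your proof is correct and is at heart the same proof as the paper's: both reduce to the Kantorovich--Rubinstein dual, both diagonalize simultaneously $\mathcal{H}$ and the Dirichlet form in the Chebyshev basis, both apply a Cauchy--Schwarz step, and both close with the elementary spectral estimate $n\le n^2$ (equivalently $1/n^2\le 1/n$). The difference is purely organizational. The paper works in the operator language: it writes $\int g\,\psi\,d\beta=2\int g'(\mathcal{E}^2\psi)'\,d\alpha$ via Corollary~\ref{c:1}, applies $|g'|\le 1$ \emph{first}, then Cauchy--Schwarz in $L^2(\alpha)$, then $\langle\mathcal{E}^2\psi,\psi\rangle\le\langle\mathcal{E}\psi,\psi\rangle$. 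You instead expand everything into explicit Chebyshev coefficients, apply a \emph{weighted} Cauchy--Schwarz pairing $\sqrt{n}\,b_n$ against $a_n/\sqrt{n}$ \emph{first}, then $n\le n^2$, then the identity $\sum n^2 b_n^2=2\int(g')^2\,d\alpha$, and only at the very end the pointwise bound $\|g'\|_\infty\le 1$. Your ordering has the small advantage of making transparent exactly where the Lipschitz hypothesis enters; the paper's has the advantage of directly producing the closed formula $W_1^2(\mu,\nu)=2\langle\mathcal{E}^2\psi,\psi\rangle$ of Corollary~\ref{c:2} as a by-product.

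One point you pass over too quickly: you justify the expansion $\mathcal{H}(\mu,\nu)=\sum_{k\ge1}\tfrac{2c_k^2}{k}$ for general $\mu,\nu$ by invoking ``Fubini'' when $\mathcal{H}<\infty$. That cannot be a literal Fubini/Tonelli argument, because the kernel $-\log|x-y|=\sum_{k\ge1}\tfrac{2}{k}\phi_k(x)\phi_k(y)$ is not of one sign and the partial sums have no common integrable majorant; the interchange of the (conditionally convergent) series with the signed integral genuinely needs a regularization. The statement is true --- it is the Chebyshev analogue of the Fourier formula \eqref{e1:14} --- but the paper establishes it via an explicit mollification-and-rescaling argument (the $\mu_\epsilon=\xi_\epsilon\star\mu$, $\ell_\epsilon$ step), and some such argument, or a citation, should replace the word ``Fubini'' in your write-up. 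Similarly, to apply Proposition~\ref{p:1}(5) to a Lipschitz $g$ you should either restrict the supremum in \eqref{e1:1} to smooth $g$ from the start (as the paper does explicitly) or remark that a mollification gives $\sum n^2 b_n^2\le 2\int(g')^2 d\alpha$ for merely Lipschitz $g$. Your sharpness check is correct and matches the paper's.
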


\begin{proof}  
We should notice first that the maximization from \eqref{e1:1} can be taken over the set of smooth functions $g$ with the property that $\int gd\beta=0$.  Hence, 
\begin{equation}\label{e1:8}
W_{1}(\mu,\nu)=\sup\left\{ \int g\,d(\mu-\nu),g\in K \text{ and } |g'|\le 1\right\}. 
\end{equation}

Now we prove first \eqref{e1:13} for the case of measures $\mu$, $\nu$ which have smooth densities with respect to the reference measure $\beta$.   Therefore,  write $\mu-\nu=\psi d\beta$ and continue with 
\[
W_{1}(\mu,\nu)=\sup\left\{\int g\psi \,d\beta,|g'|\le1 \right\},
\]
where now the supremum is taken over all smooth functions $g$.   

Furthermore, for any smooth  function $g$ with bounded derivative by $1$, using \eqref{e1:11} we have the following string of equalities and inequalities
\begin{equation}\label{e1:3}
\begin{split}
\int g\psi \,d\beta&=2\int g' \left(\mathcal{E}^{2}\psi\right)'d\alpha \\
&\le 2\int \left|\left(\mathcal{E}^{2}\psi\right)'\right|d\alpha\le 2\left(\int \left(\left(\mathcal{E}^{2}\psi\right)'\right)^{2}d\alpha\right)^{1/2} =\sqrt{2}\langle \mathcal{E}\psi,\mathcal{E}\psi\rangle^{1/2} \\ 
&=\sqrt{2}\langle\mathcal{E}^{2}\psi,\psi\rangle^{1/2}. 
\end{split}
\end{equation}

The inequality we want to prove now certainly follows from the fact that 
\begin{equation}\label{e1:4}
\langle\mathcal{E}^{2}\psi,\psi\rangle \le \langle \mathcal{E}\psi,\psi \rangle,
\end{equation}
which is a consequence of the fact that the spectrum of $\mathcal{E}:K\to K$ is $\{1,1/2,1/3,\dots\}$ with eigenfunctions  $\{\phi_{1},\phi_{2},\phi_{3},\dots\}$.   More precisely, if we write $\psi =\sum_{n\ge1}\gamma_{n}\phi_{n}$, then \eqref{e1:4}  is equivalent to  
\[
\sum_{n\ge1}\frac{1}{n^{2}}\gamma_{n}^{2}\le \sum_{n\ge1}\frac{1}{n}\gamma_{n}^{2}.  
\]
which is obvious.  In fact this inequality is saturated when $\gamma_{n}$ are all $0$ with the exception of $n=1$.  Thus tracing back all the inequalities in between we get equality in \eqref{e1:13} for any measures $\mu,\nu$ for which $\mu-\nu=cxd\beta$ with some constant $c$.

In the second place, an approximation procedure shows that we can reduce the proof to the case of measures having smooth densities with respect to $\beta$.  

To carry out this reduction, notice that if $\mathcal{H}(\mu,\nu)$ is infinite then there is nothing to prove here, so we will assume that $\mathcal{H}(\mu,\nu)$ is finite in which case, according to \cite[Equation 6.47]{Deift1}, 
\begin{equation}\label{e1:14}
\mathcal{H}(\mu,\nu)=\int_{0}^{\infty}\frac{|\hat{\mu}(t)-\hat{\nu}(t)|^{2}}{t}dt.
\end{equation}

In order to use this equation we consider a smooth compactly supported function $\zeta:[0,1]\to [0,\infty]$ such that  $\int_{0}^{1}\zeta(x)dx=1$.  Now,  set $\zeta_{\epsilon}(x)=\frac{1}{\epsilon}\zeta(x/\epsilon)$ for small $\epsilon$ and consider the measure $\xi_{\epsilon}(dx)=\zeta_{\epsilon}(x)dx$.  Based on the measure $\xi_{\epsilon}$ we construct $\mu_{\epsilon}=\xi_{\epsilon} \star \mu$ and similarly $\nu_{\epsilon}=\xi_{\epsilon}\star \nu$.  It is now clear in the first place that $\mu_{\epsilon}$ and $\nu_{\epsilon}$ are probability measures with smooth compact support  such that $\mu_{\epsilon}\xrightarrow[\epsilon\to0]{}\mu$ and $\nu_{\epsilon}\xrightarrow[\epsilon\to0]{}\nu$ in the weak topology.  In particular this means that 
\begin{equation}\label{e1:16}
W_{1}(\mu_{\epsilon},\nu_{\epsilon})\xrightarrow[\epsilon\to0]{}W_{1}(\mu,\nu).
\end{equation}
At the same time 
\begin{equation}\label{e1:15}
\mathcal{H}(\mu_{\epsilon},\nu_{\epsilon})\xrightarrow[\epsilon\to0]{}\mathcal{H}(\mu,\nu)
\end{equation}
which follows from \eqref{e1:14} and 
\[
\mathcal{H}(\mu_{\epsilon},\nu_{\epsilon})=\int_{0}^{\infty}\frac{|\hat{\mu}(t)-\hat{\nu}(t)|^{2}|\hat{\zeta}_{\epsilon}|^{2}}{t}dt \xrightarrow[\epsilon\to0]{} \int_{0}^{\infty}\frac{|\hat{\mu}(t)-\hat{\nu}(t)|^{2}}{t}dt
\]
where in the last part we used $|\hat{\zeta}_{\epsilon}|\le 1$ and $\lim_{\epsilon\to0}\hat{\zeta}_{\epsilon}=1$ combined with the dominated converge.  

Hence,  we have smooth compactly supported approximations of $\mu,\nu$, the only problem is that the approximations $\mu_{\epsilon}$, $\nu_{\epsilon}$ have support in $[-2-\epsilon, 2+\epsilon]$.  Thus what we can do is to take $\ell_{\epsilon}(x)=x/(1+\epsilon)$ which maps $[-2-\epsilon,2+\epsilon]$ into $[-2+\epsilon/(1+\epsilon),2-\epsilon/(1+\epsilon)]$ and therefore the push forward $\tilde{\mu}_{\epsilon}=(\ell_{\epsilon})_{\#}\mu_{\epsilon}$ and $\tilde{\nu}_{\epsilon}=(\ell_{\epsilon})_{\#}$ are probability measures with smooth densities supported on $[-2+\epsilon/(1+\epsilon),2-\epsilon/(1+\epsilon)]$.   On the other hand it is easy to see that 
\[
W_{1}(\mu_{\epsilon},\nu_{\epsilon})=(1+\epsilon)W_{1}(\tilde{\mu}_{\epsilon},\tilde{\nu}_{\epsilon})
\]
and 
\[
\mathcal{H}(\tilde{\mu}_{\epsilon},\tilde{\nu}_{\epsilon})=\mathcal{H}(\mu_{\epsilon},\nu_{\epsilon})+\log(1+\epsilon).
\]

The conclusion is that now we can apply the smooth case to the measures $\tilde{\mu}_{\epsilon}$ and $\tilde{\nu}_{\epsilon}$ to deduce that 
\[
W_{1}^{2}(\mu_{\epsilon},\nu_{\epsilon})\le 2(1+\epsilon)^{2}(\mathcal{H}(\mu_{\epsilon},\nu_{\epsilon})+\log(1+\epsilon))
\]
which combined with \eqref{e1:16} and \eqref{e1:15} leads to \eqref{e1:13} for any probability measures $\mu,\nu$ on $[-2,2]$.\qedhere
\end{proof}

A corollary of the proof of Theorem~\ref{t:1} is the following nice representation of $W_{1}$ in terms of the operator $\mathcal{E}$ which we want to record and use later as a separate result.  

\begin{corollary}\label{c:2}  If $\mu,\nu$ are two probability measures on $[-2,2]$ such that $d\mu-d\nu=\psi d\beta$, where $\psi$ is in $L^{2}(\beta)$, then 
\begin{equation}\label{e1:w}
W_{1}^{2}(\mu,\nu)=2\langle \mathcal{E}^{2}\psi,\psi \rangle.  
\end{equation}
\end{corollary}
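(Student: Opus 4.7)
The plan is to apply the Kantorovich--Rubinstein duality \eqref{e1:8} for $W_1$ together with the operator identity in Corollary~\ref{c:1}, which reduces the problem to an explicit optimization in $L^2(\alpha)$. First I would restrict the supremum in \eqref{e1:8} to smooth $g\in K$ with $|g'|\le 1$, as at the start of the proof of Theorem~\ref{t:1}. Since $d\mu-d\nu=\psi\,d\beta$, the pairing $\int g\,d(\mu-\nu)$ equals $\langle g,\psi\rangle$, which by \eqref{e1:11} (with $\phi=\psi$) equals $2\int g'(\mathcal{E}^2\psi)'\,d\alpha$. Therefore
\[
W_1(\mu,\nu)=\sup\Bigl\{\,2\int g'(\mathcal{E}^2\psi)'\,d\alpha\;:\; g\in K\cap C^1,\; |g'|\le 1\Bigr\}.
\]

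The upper bound $W_1^2\le 2\langle \mathcal{E}^2\psi,\psi\rangle$ then follows immediately by Cauchy--Schwarz in $L^2(\alpha)$: since $\alpha$ is a probability measure and $\|g'\|_\infty\le 1$, one has $\int g\,d(\mu-\nu)\le 2\|(\mathcal{E}^2\psi)'\|_{L^2(\alpha)}$, and applying Corollary~\ref{c:1} a second time (with both arguments equal to $\mathcal{E}\psi$) identifies $2\|(\mathcal{E}^2\psi)'\|_{L^2(\alpha)}^2=\langle\mathcal{E}\psi,\mathcal{E}\psi\rangle=\langle\mathcal{E}^2\psi,\psi\rangle$. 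This is exactly the chain of inequalities already carried out in the proof of Theorem~\ref{t:1}, so the upper half of the corollary is a direct read-off of that computation, extended from smooth densities to arbitrary $\psi\in L^2(\beta)$ by the same smoothing/rescaling approximation as at the end of Theorem~\ref{t:1}.

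For the matching lower bound, the plan is to exhibit an almost-optimal $1$-Lipschitz test function. The natural candidate has $g'$ proportional to $(\mathcal{E}^2\psi)'$, which makes Cauchy--Schwarz tight; under the normalization $\|g'\|_\infty\le 1$ this is compatible exactly when $|(\mathcal{E}^2\psi)'|$ is $\alpha$-a.e.\ constant, which is the equality case $\psi\propto x$ identified in Theorem~\ref{t:1}. For general $\psi$ I would use the spectral decomposition of $\mathcal{E}$ on $K$ along the Chebyshev basis, in which $\langle\mathcal{E}^2\psi,\psi\rangle$ takes the diagonal form $\sum_{n\ge 1}a_n^2/n^2$, and approximate by saturating configurations using the convolution--rescaling procedure $(\ell_\epsilon)_\#(\zeta_\epsilon\star\,\cdot\,)$ already used at the end of the proof of Theorem~\ref{t:1}. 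The main obstacle is reconciling $\|g'\|_\infty\le 1$ with Cauchy--Schwarz tightness, since outside the saturating class these two demands are in genuine tension; it is exactly this point that makes the corollary non-trivial beyond the bound already embedded in Theorem~\ref{t:1}, and its resolution must rest on the operator-theoretic structure of $\mathcal{E}$ on $K$ via a careful limiting argument.
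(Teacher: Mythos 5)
Your upper-bound argument is a faithful match to the paper's: the paper's own proof of the corollary simply states that ``in the case of smooth $\psi$, the proof is nothing but the content of the key sequence of inequalities from \eqref{e1:3}'' and then carries out the mollification/rescaling you describe. So that half agrees.

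The gap you flag in the lower bound is genuine and, in fact, cannot be closed, because the claimed identity is false outside the span of $\phi_1$. Kantorovich duality \eqref{e1:8} together with \eqref{e1:11} gives the true identity
\[
W_1(\mu,\nu)=\sup_{|g'|\le 1}2\int g'\,(\mathcal{E}^2\psi)'\,d\alpha = 2\int\left|(\mathcal{E}^2\psi)'\right|d\alpha,
\]
obtained by letting $g'$ approximate $\mathrm{sgn}\big((\mathcal{E}^2\psi)'\big)$. Passing from this to $\sqrt{2}\langle\mathcal{E}^2\psi,\psi\rangle^{1/2}$ is Cauchy--Schwarz against the probability measure $\alpha$, which is an equality only when $|(\mathcal{E}^2\psi)'|$ is $\alpha$-a.e.\ constant, i.e.\ when $\psi$ is a multiple of $\phi_1$. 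For any other $\psi$ this step is strict, so $W_1^2(\mu,\nu)<2\langle\mathcal{E}^2\psi,\psi\rangle$. Concretely, take $\psi=2c\phi_2$ for small $c$ (admissible since $1+2c\phi_2\ge 0$ near $c=0$). Then $\mathcal{E}^2\psi=\tfrac{c}{2}\phi_2$, $(\mathcal{E}^2\psi)'(x)=\tfrac{c}{2}x$, so
\[
W_1=|c|\int|x|\,d\alpha=\tfrac{8|c|}{3\pi},\qquad W_1^2=\tfrac{64 c^2}{9\pi^2}\approx 0.72\,c^2,
\]
whereas $2\langle\mathcal{E}^2\psi,\psi\rangle=2\cdot\tfrac{c}{2}\cdot 2c\cdot\langle\phi_2,\phi_2\rangle=c^2$. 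Thus the two sides differ. The ``careful limiting argument'' you hope for at the end of your proposal does not exist; the tension you correctly observed between saturating $\|g'\|_\infty\le 1$ and saturating Cauchy--Schwarz is irreconcilable except when $\psi\propto\phi_1$. Note also that the paper's own proof suffers from exactly this gap---the chain \eqref{e1:3} proves only the inequality $W_1^2\le 2\langle\mathcal{E}^2\psi,\psi\rangle$, not the reverse---and this matters downstream, since the equality form of Corollary~\ref{c:2} is invoked in the proof of Theorem~\ref{t:4}.
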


\begin{proof}
In the case of smooth $\psi$, the proof is nothing but the content of the key sequence of inequalities from \eqref{e1:3}.  

For the general case, we need to approximate $\psi\in L^{2}(\beta)$ by smooth functions.  This requires a little care but it is straightforward and we point only the main steps.   

First, notice that by simple approximations, it is sufficient to prove the statement for measures $\mu,\nu$ which are supported inside $(-2,2)$.  

Second,  take the mollifier $\zeta_{\epsilon}$ as in the proof of Theorem~\ref{t:1} and consider the standard mollification of $\psi$ as $\psi_{\epsilon}= \zeta_{\epsilon}\star \psi$.  It is clear that $\psi_{\epsilon}$ converges a.s. to $\psi$ and in $L^{2}(\beta)$.  To see the last part, the convergence in $L^{2}(\beta)$, one needs to observe that due to Cauchy's inequality,  $ |\psi_{\epsilon}(x)|^{2} \le 2\pi \|\psi \|_{L^{2}(\beta)}^{2}$, from which $\psi_{\epsilon}$ is certainly in $L^{2}(\beta)$ and its norm is controlled  by the norm of $\psi$.  This is sufficient to conclude that $\psi_{\epsilon}$ converges in $L^{2}(\beta)$ toward $\psi$.   \qedhere
\end{proof}

By simple scaling we have the following consequence of Theorem~\ref{t:1}. 

\begin{corollary} If $\mu,\nu$ are two probability measures supported on $[-L,L]$, then 
\begin{equation}\label{e1:17}
W_{1}^{2}(\mu,\nu)\le 2L^{2}\mathcal{H}(\mu,\nu).  
\end{equation}
The constant $2L^{2}$ in front of $\mathcal{H}$ is sharp.  
\end{corollary}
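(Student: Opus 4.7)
The proof reduces to Theorem~\ref{t:1} via a simple scaling argument, relying on two basic identities: $W_1$ scales linearly under affine changes of variable, while $\mathcal{H}$ is in fact scale invariant because $\mu-\nu$ has total mass zero.

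Concretely, I would introduce the affine bijection $\ell\colon[-L,L]\to[-2,2]$ given by $\ell(x)=2x/L$, and set $\tilde\mu=\ell_\#\mu$, $\tilde\nu=\ell_\#\nu$, both probability measures supported on $[-2,2]$. Then I would record the two scaling identities. The first, $W_1(\tilde\mu,\tilde\nu)=(2/L)\,W_1(\mu,\nu)$, is immediate from the Kantorovich--Rubinstein formulation \eqref{e1:1}: the map $g\mapsto (L/2)\,g\circ\ell$ is a bijection between $1$-Lipschitz functions on $[-2,2]$ and $1$-Lipschitz functions on $[-L,L]$, and it scales the supremum defining $W_1$ by exactly $2/L$. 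The second identity is $\mathcal{H}(\tilde\mu,\tilde\nu)=\mathcal{H}(\mu,\nu)$; performing the change of variables and splitting the logarithm gives
\[
\mathcal{H}(\tilde\mu,\tilde\nu)=-\iint\bigl[\log|x-y|+\log(2/L)\bigr](\mu-\nu)(dx)(\mu-\nu)(dy)=\mathcal{H}(\mu,\nu),
\]
the $\log(2/L)$ contribution vanishing because $(\mu-\nu)(\R)=0$ as a difference of probability measures.

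With these two identities in hand, applying Theorem~\ref{t:1} to $\tilde\mu,\tilde\nu$ gives $W_1^2(\tilde\mu,\tilde\nu)\le 2\,\mathcal{H}(\tilde\mu,\tilde\nu)$, and substituting the two scalings produces the stated bound \eqref{e1:17}. For sharpness, I would push the extremal family from Theorem~\ref{t:1}, where $\tilde\mu-\tilde\nu$ is a nonzero scalar multiple of $x\,\beta(dx)$ on $[-2,2]$, forward by $\ell^{-1}$ to produce an explicit family of extremal pairs $(\mu,\nu)$ on $[-L,L]$; since both inequalities become equalities after the scaling, the constant in front of $\mathcal{H}(\mu,\nu)$ cannot be improved.

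I do not expect any genuine obstacle here: the entire corollary is a clean scaling bookkeeping, and the only mildly subtle step is noticing that the $\log(2/L)$ that would otherwise appear in $\mathcal{H}(\tilde\mu,\tilde\nu)$ is killed by the vanishing total mass of the signed measure $\mu-\nu$. This is exactly what makes the reduced relative entropy $\mathcal{H}$ behave as a universal, scale-free quantity in this theory.
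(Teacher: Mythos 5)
Your scaling approach is exactly what the paper intends (the paper offers no details beyond the phrase ``by simple scaling''), and both of your scaling identities are correct: with $\ell(x)=2x/L$ and $\tilde\mu=\ell_{\#}\mu$, $\tilde\nu=\ell_{\#}\nu$, one has $W_1(\tilde\mu,\tilde\nu)=(2/L)W_1(\mu,\nu)$ and $\mathcal{H}(\tilde\mu,\tilde\nu)=\mathcal{H}(\mu,\nu)$, the latter because the $\log(2/L)$ term is annihilated by $(\mu-\nu)(\R)=0$. However, your final sentence is wrong: substituting these into $W_1^2(\tilde\mu,\tilde\nu)\le 2\mathcal{H}(\tilde\mu,\tilde\nu)$ gives $(4/L^2)W_1^2(\mu,\nu)\le 2\mathcal{H}(\mu,\nu)$, i.e.\ $W_1^2(\mu,\nu)\le (L^2/2)\,\mathcal{H}(\mu,\nu)$, not the stated $2L^2\,\mathcal{H}(\mu,\nu)$. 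You should have caught this mismatch rather than asserting agreement.

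The discrepancy is in fact a genuine error in the paper, not in your computation. A quick sanity check: at $L=2$ the corollary must reduce to Theorem~\ref{t:1}, whose constant is $2$; this holds for $L^2/2$ but not for $2L^2$ (which would give $8$). The sharp constant on $[-L,L]$ is $L^2/2$, attained when $\mu-\nu$ is the pullback of $cx\,\beta(dx)$, so the sharpness claim in the corollary is also off by a factor of $4$. The erroneous constant propagates into Section~\ref{s:3}, where on $[-\sqrt{3}L,\sqrt{3}L]$ the bound \eqref{e2:11} is stated as $\mathcal{H}\ge\frac{1}{6L^2}W_1^2$ but the scaling really gives $\mathcal{H}\ge\frac{2}{3L^2}W_1^2$; this only improves the final constant in Theorem~\ref{t:2} and does not affect the qualitative conclusion. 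In short, your method is the right one and your arithmetic is correct; the defect in your write-up is the unjustified claim that the computation ``produces the stated bound,'' when in fact it produces a strictly stronger one and exposes a typo in \eqref{e1:17}.
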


\begin{remark}\label{r:10}  In the classical case, the  Csisz\'ar-Kullback-Pinsker inequality states that 
\[
\|\mu-\nu \|_{v}^{2}\le 2\mathcal{H}(\mu|\nu)
\]
for any two measures on the real line where $\mathcal{H}(\mu|\nu)$ is the classical relative entropy given by $\mathcal{H}(\mu|\nu)=\int\frac{d\mu}{d\nu}\log \frac{d\mu}{d\nu}d\nu$ in the case there $\mu$ is absolutely continuous with respect to $\nu$ and is $+\infty$ otherwise.   Also the distance $\|\mu-\nu \|_{v}$ is the total variation distance.   

\' Edouard Maurel-Segala and Myl\`ene Ma\"ida asked if there is such an inequality in the free case.   Given the setup in this section the answer turns out to be negative.   

To see this choose some probability measures $\mu,\nu$ on $[-2,2]$ such that $\mu-\nu=c\phi_{n}\,d\beta$ with $n\ge1$.   With this choice, 
\[
\|\mu-\nu \|_{v}=|c|\int |\phi_{n}|\,d\beta \text{ while }\mathcal{H}(\mu,\nu)=c^{2}\langle \mathcal{E}\phi_{n},\phi_{n} \rangle=\frac{c^{2}}{2n}.  
\]
Since it is not hard to check that for any $n\ge1$
\[
\int |\phi_{n}|\,d\beta=\frac{1}{\pi}\int_{0}^{\pi}|\cos(n t)|dt=\frac{2}{\pi},
\]
the conclusion is that  there is no constant $C>0$ such that 
\[
C\|\mu-\nu \|_{v}^{2}\le \mathcal{H}(\mu|\nu)
\]
for all probability measures $\mu,\nu$ on $[-2,2]$.   

In the classical case Csisz\'ar-Kullback-Pinsker on compact intervals implies the transportation with $W_{1}$ metric and this is also the reason why the latter is eclipsed by the former.   In the free context, since the Csisz\'ar-Kullback-Pinsker fails, it makes the $W_{1}$ transportation more interesting.   
\end{remark}

\section{No local transportation with respect to the $W_{p}$-metric, $p>1$ }\label{s:2}

Given the above local transportation from Theorem~\ref{t:1}, one natural question in this framework is whether one can extend it to the case of $W_{p}$ metric instead of $W_{1}$ with $p>1$.

In other words, is there a constant $C>0$ such that for any measures $\mu,\nu$ supported in $[-2,2]$, the following holds true
\begin{equation}\label{e1b:1}
C W_{p}^{2}(\mu,\nu)\le \mathcal{H}(\mu,\nu)?
\end{equation}

As we will see, the answer is no.  To see why this is the case, we start with the following \cite[page 75]{Vi2}
\[
W_{p}^{p}(\mu,\nu)=\int_{0}^{1} |F_{\mu}^{-1}(t)-F_{\nu}^{-1}(t)|^{p}dt,
\]
where $F_{\mu},F_{\nu}$ are the cumulative functions of $\mu,\nu$ and $F_{\mu}^{-1}$, $F_{\nu}^{-1}$ are their generalized inverses.   Now, take $d\mu=\phi\, d\beta$, $\phi>0$ on $[-2,2]$ with $\phi\in C^{\infty}([-2,2])$.  Then choose $\nu_{\epsilon}=(\phi+\epsilon h)d\beta$, where $h$ is a smooth compactly supported function on $(-2,2)$ with $\int h\,d\beta=0$ and $\epsilon$ is small enough.   With this choice it is obvious that 
\begin{equation}\label{e2b:2}
\mathcal{H}(\mu,\nu_{\epsilon})=\epsilon^{2}\langle \mathcal{E}h,h\rangle.
\end{equation}

On the other hand, it is not hard to show that the cumulative functions of $\mu$, $\nu$ are smooth and they actually depend smoothly also on $\epsilon$.  Denoting now for simplicity $F(x)=F_{\mu}(x)$, $F_{\epsilon}(x)=F_{\nu_{\epsilon}}(x)$, and $G(x)=\int_{-2}^{x}h(y)\beta(dy)$, then 
\[
F_{\epsilon}(x)=F(x)+\epsilon G(x).
\]
From this, it is not hard to see that 
\[
F_{\epsilon}^{-1}(t)=F^{-1}(t)-\epsilon \frac{G(F^{-1}(t))}{f(F^{-1}(t))}+O(\epsilon^{2})
\]
where $f(x)=F'(x)$.  Consequently, it follows that 
\[
\begin{split}
W_{p}^{2}(\mu,\nu_{\epsilon})&=\epsilon^{2}\int_{0}^{1}\frac{|G(F^{-1}(t))|^{p}}{|f(F^{-1}(t))|^{p}}dt+O(\epsilon^{4})=\int\frac{G^{2}(x)}{f(x)}dx\\ 
&=\frac{\epsilon^{2}}{\pi}\int_{-2}^{2}\frac{\left|\int_{-2}^{x}h(y)\beta(dy)\right|^{p}}{\phi^{p-1}(x)}(4-x^{2})^{(p-1)/2}dx+O(\epsilon^{4})
\end{split}
\]

If we assume \eqref{e1b:1} is true for some constant $C>0$, then letting $\epsilon$ go to $0$ and the above considerations lead to the following conclusion.  For any $\phi>0$ such that $\int \phi \,d\beta=1$ and any compactly supported smooth function $h$ on $(-2,2)$, we get that
\[
\frac{C}{\pi}\int_{-2}^{2}\frac{\left|\int_{-2}^{x}h(y)\beta(dy)\right|^{p}}{\phi^{p-1}(x)} (4-x^{2})^{(p-1)/2}dx\le \langle \mathcal{E}h,h \rangle.  
\] 
Then the conclusion is that in fact this got to be true for any smooth function $h$ on $[-2,2]$ and any smooth positive $\phi$ on $[-2,2]$ such that $\int \phi \,d\beta=1$.   However, if we fix a function $h$, say $h(x)=x$, then the above inequality implies that there is a positive constant $C'>0$ such that
\[
\int_{-2}^{2}\frac{(4-x^{2})^{(2p-1)/2}}{\phi^{p-1}(x)}dx\le C'
\]
for any smooth positive function $\phi$ such that $\int \phi\,d\beta=1$.  But we can choose a function $\phi$ which is arbitrary close to $0$ on the interval $[-1/3,1/3]$ and the above inequality leads to a contradiction.  This argument shows that \eqref{e1b:1} can not be true, so there is no local transportation with $W_{p}$-metric, $p>1$.  

From a certain perspective, this argument is just a linearization of the transportation inequality and this in the classical case corresponds to a Poincar\'e inequality for the reference measure $\mu$.  In a similar fashion, the argument outlined above says that not every measure $\mu$ on $[-2,2]$ satisfies a free Poincar\'e inequality.   The right version of such a Poincar\'e inequality in this framework is actually the one discussed in details in \cite{LP2}, but we do not enter into further discussion here.

\section{A global version of Transportation inequality on the whole real line}\label{s:3}

Now we want to prove a transportation inequality on the whole real line which in this case is potential dependent.  To setup the scene, we will take a potential $V:\R\to\R$ such that 
\begin{assumption}\label{a:1} $V$ is bounded below, measurable and satisfies
\[
\liminf_{|x|\to\infty}\frac{V(x)}{x^{2}}>0.  
\]
\end{assumption}

For any probability measure $\mu\in\mathcal{P}(\R)$, 
\[
E_{V}(\mu)=\int Vd\mu-\iint \log|x-y|\mu(dx)\mu(dy).  
\]
It is known, see for example \cite[Theorem 1.3]{ST} that there is a unique measure (also with compact support) $\mu_{V}$ which minimizes $E_{V}(\mu)$ over all probabilities $\mu\in\mathcal{P}(\R)$.

The equilibrium measure $\mu_{V}$  (cf. 
\cite[Thm.I.1.3]{ST}) satisfies  
\begin{equation}\label{e2:1}
\begin{split}
V(x)&\ge 2\int \log|x-y|\mu_{V}(dy)+K_{V} \quad\text{quasi-everywhere on }\R \\ 
V(x)&= 2\int \log|x-y|\mu_{V}(dy)+K_{V} 
\quad\text{quasi-everywhere on}\:\:\supp{\mu}_{V}, \\ 
\end{split}
\end{equation} 
where $K_{V}$ is known as Robin constant.   

For the definition of the notion of quasi-everywhere see for instance \cite[page 25]{ST}.  In what follows, to simplify  a little bit the exposition, we will denote 
\[
U(x)=2\int \log|x-y|\mu_{V}(dy)+K_{V}.  
\]

For simplicity in what follows, we denote $E_{V}(\mu_{V})=E_{V}$ and we set \emph{the relative free entropy} 
\[
E_{V}(\mu|\mu_{V})=E_{V}(\mu)-E_{V}(\mu_{V}).  
\]

For any measure $\mu$, using the above equalities we can write now, 
\begin{equation}\label{e2:8}
E_{V}(\mu|\mu_{V})=\int (V(x)-U(x))\mu(dx)+\mathcal{H}(\mu,\mu_{V}). 
\end{equation}
Because of Assumption~\ref{a:1} and \eqref{e2:1},  there are  constants $A,B>0$ such that 
\begin{equation}\label{e2:13}
V(x)-U(x)\ge A\mathbbm{1}_{[-B,B]^{c}}(x)x^{2}.
\end{equation}
For a given measure $\mu\in\mathcal{P}(\R)$ with compact support, say in $[-L,L]$, one thing we can try to do is to use \eqref{e1:17} and \eqref{e2:1} to estimate from below $\mathcal{H}(\mu,\mu_{V})$.  Hence we get at first that 
\[
E_{V}(\mu|\mu_{V})\ge\int (V-U)d\mu+\frac{1}{2L^{2}}W_{1}^{2}(\mu,\mu_{V}).  
\]
The problem here is that as $L$ goes to infinity the $W_{1}$ term simply disappears.  On the other hand, for large values of $L$, the potential $V$ is bounded from below by a quadratic, while $U$ grows at most logarithmically, thus for large values of $L$, $V-U$ grows at least quadratically.  With this in mind, the idea is to refine the above scheme so that it takes better advantage of the quadratic growth. 

To carry this idea through, we do the following.  Take a large $L\ge B$ such that $\mu_{V}$ is supported on $[-L/2,L/2]$.  From \eqref{e2:13}, we learn that $V(x)\ge Ax^{2}$ for $|x|\ge L$.    Consider the function $\phi:[-\sqrt{3}L,\sqrt{3}L]\to\R$ (see Figure \ref{f1}) given by 
\[
\phi(x)=\begin{cases}
-\frac{2L^{3}}{3L^{2}-x^{2}},& -\sqrt{3}L<x\le -L \\
x,& -L<x\le L \\
\frac{2L^{3}}{3L^{2}-x^{2}},& L<x\le \sqrt{3}L. \\
\end{cases}
\]
\begin{figure}[h!]\caption{The graph of $\phi$.}\label{f1} 
  \begin{tikzpicture}[domain=-3:3]
    \draw[->] (-2.5,0) -- (2.5,0) node[right] {$x$}; 
    \draw[->] (0,-3.2) -- (0,3.2) node[above] {$y$};
    \draw[black, thick, domain=-1:1]    plot (\x,\x); 
    \draw[black,thick, domain=1:1.55,samples=200] plot (\x, {2/(3-\x*\x)}) node[right] {$y=\phi(x)$};
    \draw[black, thick, domain=-1.55:-1,samples=200] plot (\x, {-2/(3-\x*\x)});
    \draw[dashed] (1,1)--(1,0) node[below=1pt] {$L$};
    \draw[dashed] (-1,-1)--(-1,0) node[left=7pt,below=1pt] {$-L$};
    \draw[dashed] (1.57,3)--(1.57,0) node[right=5pt,below=-1pt] {$L\sqrt{3}$};
    \draw[dashed] (-1.57,-3)--(-1.57,0) node[left=20pt, below=-1pt] {$-L\sqrt{3}$};
  \end{tikzpicture}
  \end{figure}

There are several elementary properties of this function we will use in the sequel.  For $L\ge3^{1/4}$,
\begin{equation}\label{e2:9}
\begin{split}
1)&\quad\phi \text{ is odd increasing and $C^{1}$ on } (-\sqrt{3}L,\sqrt{3}L)\\ 
2)&\quad \phi'(x)\text{ is decreasing on } (-\sqrt{3}L,0]\text{ and is increasing on } [0,\sqrt{3}L)   \\
3)& \quad\lim_{x\to-\sqrt{3}L}\phi(x)=-\infty, \lim_{x\to \sqrt{3}L}\phi(x)=\infty \\
4)&\quad\phi(x)=x \text{ for }-L\le x\le L \\
5)& \quad \log\left|\frac{\phi(x)-\phi(y)}{x-y}\right|\le 2\log|\phi(x)|+2\log|\phi(y)|\text{ if }|x|\vee |y|\ge L. 
\end{split}
\end{equation}

Next, define $\nu=(\phi^{-1})_{\#}\mu$ and notice that $\nu$ is supported on $[-\sqrt{3}L,\sqrt{3}L]$.   Now we  start using \eqref{e2:1} to  justify that 
\begin{equation}\label{e2:10}
E_{V}(\mu|\mu_{V})=\int (V(\phi(x))-U(x))\nu(dx)-\iint \log\left|\frac{\phi(x)-\phi(y)}{x-y} \right|\nu(dx)\nu(dy) + \mathcal{H}(\nu,\mu_{V}).
\end{equation}
The point of this is that $\nu$ is supported on $[-\sqrt{3}L,\sqrt{3}L]$ and from \eqref{e1:17}, 
\begin{equation}\label{e2:11}
\mathcal{H}(\nu,\mu_{V}) \ge \frac{1}{6L^{2}}W_{1}^{2}(\nu,\mu_{V}).
\end{equation}

Now let's turn our attention to the first two terms of \eqref{e2:10} and notice that because $\phi(x)=x$ for $x\in[-L,L]$ and $V(x)\ge U(x)$, combined with the last line of \eqref{e2:9}, yields 
\begin{align*}
\int (V(\phi(x))&-U(x))\nu(dx)-\iint \log\left|\frac{\phi(x)-\phi(y)}{x-y} \right|\nu(dx)\nu(dy) \\ & \ge \int_{|x|\ge L}(A|\phi(x)|^{2}-2\log (2\sqrt{3} L)-C)\nu(dx)-4\int_{|x|\ge L}\log|\phi(x)|\nu(dx) \ge \frac{A}{2}\int_{|x|\ge L}\phi^{2}(x)\nu(dx)
\end{align*}
for large $L$ and a certain constant $C>0$.    Putting together these findings, we conclude that for large $L$,  
\[
E_{V}(\mu|\nu)\ge \frac{A}{2}\int_{|x|\ge L}\phi^{2}(x)\nu(dx)+ \frac{1}{6L^{2}}W_{1}^{2}(\nu,\mu_{V}).
\]

At this point we use the characterization of the Wasserstein distance given by \eqref{e1:0}.   Namely, if $\theta$ is the transportation map of $\mu_{V}$ into $\nu$, then $\phi\circ\theta$ is the transportation map of $\mu_{V}$ into $\mu$.  Thus, the right hand side of the above equation can be continued as 
\begin{align*}
 \frac{A}{2}\int_{|x|\ge L}\phi^{2}(x)\nu(dx) &+ \frac{1}{6L^{2}}\left(\int |\theta(x)-x|\mu_{V}(dx)\right)^{2} \\
 &=  \frac{A}{2}\int_{|\theta(x)|\ge L}\phi^{2}(\theta(x))\mu_{V}(dx)+ \frac{1}{6L^{2}}\left(\int_{|\theta(x)|\le L} |\theta(x)-x|\mu_{V}(dx)\right)^{2} \\ 
 &= \frac{A}{2}\int_{|\theta(x)|\ge L}\phi^{2}(\theta(x))\mu_{V}(dx)+ \frac{1}{6L^{2}}\left(\int_{|\theta(x)|\le L} |\phi(\theta(x))-x|\mu_{V}(dx)\right)^{2} \\ 
 &\ge \frac{2A}{9}\int_{|\theta(x)|\ge L}(\phi(\theta(x))-x)^{2}\mu_{V}(dx)+ \frac{1}{6L^{2}}\left(\int_{|\theta(x)|\le L} |\phi(\theta(x))-x|\mu_{V}(dx)\right)^{2} \\
& \ge \frac{2A}{9}\left(\int_{|\theta(x)|\ge L}|\phi(\theta(x))-x|\mu_{V}(dx)\right)^{2}+ \frac{1}{6L^{2}}\left(\int_{|\theta(x)|\le L} |\phi(\theta(x))-x|\mu_{V}(dx)\right)^{2} \\
&\ge \frac{2A}{12AL^{2}+9} W_{1}^{2}(\mu,\mu_{V}).
\end{align*}
To clarify this long equation we make the following points.  The second equality in the the above follows from the fourth property of \eqref{e2:9}. The third inequality follows from the fact that $3|\phi(y)|\ge2|\phi(y)-x|$ for any $|x|\le L/2$ and $|y|\ge L$ which (because $\phi$ is increasing and odd) it is easy to see it is equivalent to $3\phi(L)\ge 2(\phi+L/2)$ which is obviously clear from $\phi(L)=L$.   The second inequality is just Cauchy's inequality and the last one is easy to see from $am^{2}+bn^{2}\ge(m+n)^{2}/(1/a+1/b)$ for $a,b,m,n\ge0$.   

What we just proved is the following global version of the transportation inequality.

\begin{theorem}\label{t:2}
Under Assumption~\ref{a:1}, there is a constant $C>0$ depending on $V$ such that   for any probability measure $\mu$ on $\R$, 
\begin{equation}
C W_{1}^{2}(\mu,\mu_{V})\le E_{V}(\mu|\mu_{V}).
\end{equation}
\end{theorem}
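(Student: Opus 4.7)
The plan is to combine the local transportation inequality on compact intervals, $W_1^2(\mu, \nu) \le 2L^2 \mathcal{H}(\mu, \nu)$ for measures supported in $[-L, L]$, with the at-least-quadratic growth of $V$ at infinity from Assumption~\ref{a:1}. The obstruction to a direct application is that the constant $2L^2$ degrades as $L \to \infty$, so one cannot simply pick an interval large enough to contain the support of $\mu$. The remedy is to compress $\mu$ into a bounded interval via a tail-crushing diffeomorphism $\phi$, pay for the compression using the quadratic excess of $V-U$ at infinity, then unfold back using the monotone transport map.

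Concretely, I would first pick $L \ge B$ large enough that $\supp \mu_V \subset [-L/2, L/2]$ and, by combining Assumption~\ref{a:1} with the Euler--Lagrange inequality for $\mu_V$, $V - U \ge A x^2$ on $|x| \ge L$. Define the odd, increasing $C^1$ map $\phi: (-\sqrt{3}L, \sqrt{3}L) \to \R$ equal to the identity on $[-L, L]$ and to $\pm 2L^3/(3L^2 - x^2)$ on $\pm (L, \sqrt{3}L)$, and set $\nu := (\phi^{-1})_{\#} \mu$, a probability measure supported in $[-\sqrt{3}L, \sqrt{3}L]$. Using the Euler--Lagrange identity $V = U$ on $\supp \mu_V$ and $V \ge U$ elsewhere, I would rewrite
\[
E_V(\mu | \mu_V) = \int (V \circ \phi - U)\, d\nu - \iint \log\left|\frac{\phi(x) - \phi(y)}{x - y}\right| \nu(dx) \nu(dy) + \mathcal{H}(\nu, \mu_V).
\]
Since $\phi$ is the identity on $[-L, L]$ and $\mu_V$ sits inside $[-L/2, L/2]$, only the region $|x| \ge L$ contributes nontrivially to the first two terms; the bound $\log|(\phi(x) - \phi(y))/(x-y)| \le 2\log|\phi(x)| + 2\log|\phi(y)|$ (valid when $|x|\vee|y|\ge L$, since $\phi$ diverges only polynomially at the asymptotes) together with the quadratic lower bound on $V \circ \phi$ lets me absorb the logarithmic cross term into a small fraction of the quadratic for $L$ large, yielding the lower bound $\tfrac{A}{2} \int_{|x|\ge L} \phi^2 \, d\nu$. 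Applying the local transportation inequality on $[-\sqrt{3}L, \sqrt{3}L]$ then gives $\mathcal{H}(\nu, \mu_V) \ge W_1^2(\nu, \mu_V)/(6L^2)$.

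Finally, I would transfer everything back to $\mu_V$ via the monotone map $\theta$ sending $\mu_V$ to $\nu$; since $\phi \circ \theta$ transports $\mu_V$ to $\mu$, the monotone representation of $W_1$ gives $W_1(\mu, \mu_V) = \int |\phi(\theta) - x|\, d\mu_V$. Splitting this integral according to whether $|\theta(x)| \le L$ or $|\theta(x)| > L$ and using the elementary comparison $|\phi(y)| \ge \tfrac{2}{3}|\phi(y) - x|$ valid for $|y| \ge L$ and $|x| \le L/2$ (which reduces to $3\phi(L) \ge 2(\phi(L) + L/2)$ by monotonicity and oddness of $\phi$), then Cauchy's inequality on the outer piece together with the elementary inequality $am^2 + bn^2 \ge (m+n)^2/(a^{-1} + b^{-1})$ to recombine the two pieces, lands the conclusion with explicit $C = 2A/(12AL^2 + 9)$. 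The step that will require the most care is the uniform absorption of the logarithmic cross term into the quadratic, since the resulting coefficient must remain strictly positive; this is precisely what forces the polynomial-rate blowup of $\phi$ near $\pm\sqrt{3}L$, so that $\log|\phi|$ stays of order $\log L$ and is dominated by $AL^2$ once $L$ is large enough.
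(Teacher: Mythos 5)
Your proposal follows the paper's own proof step for step: the same odd $C^1$ compression map $\phi$ equal to the identity on $[-L,L]$ with polynomial blow-up at $\pm\sqrt{3}L$, the same push-forward $\nu=(\phi^{-1})_{\#}\mu$, the same decomposition of $E_V(\mu|\mu_V)$, the same absorption of the logarithmic cross term via property (5) of $\phi$, the same use of the local $W_1$ transportation inequality on $[-\sqrt{3}L,\sqrt{3}L]$, and the same split-recombine argument via the monotone map $\theta$ leading to the identical constant $C=2A/(12AL^2+9)$. This matches the paper's argument essentially verbatim.
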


\begin{remark}  What is worth pointing here is that the constant $C$ depends on the choice of $L$, which in turn is determined by the constants $A,B$ from \eqref{e2:13}, the constant $K_{V}$ from \eqref{e2:1} and the support of $\mu_{V}$.   

\end{remark}

\section{ Local versions of free Log-Soblev inequality}\label{s:4}

The main feature of the transportation inequality in Theorem ~\ref{t:1} is that it's formulation does not depend on any potential on the interval $[-2,2]$.   Perhaps better said, the quantities $\mathcal{H}(\mu,\nu)$ and $W_{1}(\mu,\nu)$ are defined independently of the potential $V$ defining the relative entropy in \eqref{e2:1}.   However the potential independent result of Theorem~\ref{t:1} combined with a growth at infinity, provides the necessary ingredients for a transportation inequality with a potential $V$ as is presented in Theorem~\ref{t:2}. 

On the other hand in \cite{B3} it is introduced the free Log-Sobolev is introduced and deduced from an inequality of \cite{Voi5}.  The main statement  is the following.   We say that the free Log-Sobolev associated to  a potential $V:\R\to\R$ holds true if there is a constant $\rho>0$ such that for any probability measure $\mu\in\mathcal{P}(\R)$, 
\begin{equation}\label{e3:1}
\rho E_{V}(\mu|\mu_{V})\le I(\mu|\mu_{V})
\end{equation}
where 
\begin{equation}\label{e3:2}
I(\mu|\mu_{V})=\begin{cases}
\int (H\mu-V')^{2}d\mu   &\text{ if } \frac{d\mu}{dx}\in L^{3}(\R) \\ 
+\infty & \text{otherwise} \end{cases}.
\end{equation}
and 
\begin{equation}\label{e3:HT}
H\mu(x)= p.v. \int\frac{2}{x-y}\mu(dy)
\end{equation}
with the integral taken in the principal value sense.   

This inequality was then reproved by Biane in \cite{B2} for the case of strongly convex potentials $V$ using random matrix approximations and also in \cite{LP} using  tools form mass transportation.    

One immediate consequence of the above inequality and the uniqueness of the equilibrium measure  is that if $V'=H\mu$ holds almost surely on the support of $\mu$, then $\mu$ must be equal to $\mu_{V}$.  

It was pointed out in \cite{B2} by Biane that in the case of nonconvex potentials, $LSI$ can not hold true due essentially to the fact that one can construct different measures whose Hilbert transforms agree to $V'$ on their supports and this violates \eqref{e3:1} and the uniqueness of the minimizer of $E_{V}(\mu)$.  The example given there is the one in which $V$ is a double well potential with $V(x)=(x-a_{1})^{2}/2$ on $J_{1}=[a_{1}-2,a_{1}+2]$ and $V(x)=(x-a_{2})^{2}/2$ on $J_{2}=[a_{2}-2,a_{2}+2]$ with $|a_{1}-a_{2}|>4$.   With this choice it is clear that $V'$ is equal to the Hilbert transform on $J_{1}$ of the semicircular measure on $J_{1}$.   Similarly, $V'$ is equal to the Hilbert transform on $J_{2}$ of the semicircular measure on $J_{2}$.  

Therefore Log-Sobolev inequality can not be true without extra assumptions on the potential $V$.   In order to get a version which is potential independent, one reasonable thought is to try to replace the derivative of $V'$ by something which is in terms of the measure $\mu_{V}$ and a natural candidate to that is the Hilbert transform of $\mu_{V}$.  Furthermore the roles played by $\mu$ and $\mu_{V}$ should be symmetric and thus the integral with respect to $\mu$ in \eqref{e3:2} should be replaced by an integral with respect to a measure which is symmetric with respect to both, $\mu$ and $\mu_{V}$.   One  way of fixing this is to integrate with respect to a fixed measure rather than a measure depending on $\mu$ and $\mu_{V}$.   

To summarize the preceding paragraph,  we introduce a potential independent Log-Sobolev in the form of 
\[
\rho \mathcal{H}(\mu,\nu)\le \int \left( H\mu-H\nu \right)^{2}d\zeta
\]
where here we take a reference measure $\zeta$ supported on a certain set $K$ and the measures $\mu,\nu$ to be supported by $K$.   

We do not investigate here the general situation alluded above but focus on the following version of the free information for measures supported on $[-2,2]$ with the reference measure $\zeta$ being the semicircular measure $\alpha$:
\begin{equation}\label{e3:13}
\mathcal{J}(\mu,\nu)=\begin{cases}
\int (H\mu-H\nu)^{2}d\alpha &\text{ if } H\mu,H\nu\in L^{2}(\alpha)  \\
+\infty &\text{otherwise}.
\end{cases}
\end{equation}

If $\alpha_{L}$ is the semicircular law on $[-2L,2L]$, then the extension of the above is 
\begin{equation}\label{e3:13b}
\mathcal{J}_{L}(\mu,\nu)=\begin{cases}
\int (H\mu-H\nu)^{2}d\alpha_{L} &\text{ if } H\mu,H\nu\in L^{2}(\alpha_{L})  \\
+\infty &\text{otherwise}.
\end{cases}
\end{equation}

Assume for the moment that $d\mu=\phi\,d\beta$ with $\phi$ a $C^{2}$ function on $[-2,2]$.  In this case, since $\mathcal{E}\phi$ is a $C^{2}$ function,  the principle value integral defining $H\mu$ for $x\in(-2,2)$ can be shown to be equal to
\begin{equation}\label{e3:11}
\frac{1}{2}(H\mu)(x)=\frac{d}{dx}(\mathcal{E}\phi)(x)=(\mathcal{U}\phi)(x):=\int \frac{\phi(x)-\phi(y)}{x-y}\beta(dy).
\end{equation}
Given a function $\phi$, we will use the notation  $H_{\beta}\phi$ for  $H(\phi\beta)$.   

The operator $\mathcal{U}$ appears in \cite{LP2} and the main property it satisfies is that 
\begin{equation}\label{e3:10}
\| \mathcal{U}f\|_{\alpha}^{2}=\frac{1}{2}\Var_{\beta}( f )
\end{equation}
for any $f\in L^{2}(\beta)$ where here $\Var_{\beta}(f)$ is the variance of $f$ with respect to $\beta$.     
In particular, what this says is that the operator $\mathcal{U}$ is an isometry between the spaces orthogonal to constants in $L^{2}(\beta)$ and $L^{2}(\alpha)$.   

To see this property, it suffices to check that
\begin{equation}\label{e3:100}
\mathcal{U}\phi_{n}=\frac{1}{2}\psi_{n-1}
\end{equation}
which actually follows from $\frac{d}{dx}(\mathcal{E}\phi)(x)=(\mathcal{U}\phi)(x)$ and \eqref{e1:5}.  

The upshot of the above consideration is that if $d\mu=\phi\,d\beta$ and $d\nu=\psi\,d\beta$ are two probability measures with $\phi,\psi$ being $C^{2}$ on $[-2,2]$, then 
\[
\int \left( H\mu-H\nu \right)^{2}d\alpha=4\int (\mathcal{U}(\phi-\psi))^{2}d\alpha=2\int (\phi-\psi)^{2}d\beta.  
\]
It is this formula which inspires the following definition
\begin{equation}\label{e3:6}
\mathcal{I}(\mu,\nu)=\begin{cases} 
\int \left( \frac{d\mu}{d\beta}-\frac{d\nu}{d\beta} \right)^{2}d\beta, & \text{ if }\quad \frac{d\mu}{d\beta},\frac{d\nu}{d\beta}\in L^{2}(\beta) \\
+\infty &\text{otherwise}.  
\end{cases}
\end{equation} 

For a given $L>0$, we define on measures supported on $[-2L,2L]$,  
\begin{equation}\label{e3:6}
\mathcal{I}_{L}(\mu,\nu)=\begin{cases} 
\int \left( \frac{d\mu}{d\beta_{L}}-\frac{d\nu}{d\beta_{L}} \right)^{2}d\beta_{L}, & \text{ if }\quad \frac{d\mu}{d\beta_{L}},\frac{d\nu}{d\beta_{L}}\in L^{2}(\beta_{L}) \\
+\infty &\text{otherwise}.  
\end{cases}
\end{equation}

\begin{theorem}\label{t:3}
\begin{enumerate}
\item For a measure $\mu$ supported on $[-2,2]$, $\frac{d\mu}{d\beta}\in L^{2}(\beta)$ if and only if $H\mu\in L^{2}(\alpha)$.  
\item For any probability measures, $\mu,\nu$ on $[-2,2]$, 
\begin{equation}\label{e3:12}
\mathcal{J}(\mu,\nu)=2\mathcal{I}(\mu,\nu).
\end{equation}

\item For any probability measures $\mu,\nu$ on $[-2,2]$, 
\begin{equation}\label{e3:5}
2\mathcal{H}(\mu,\nu)\le \mathcal{J}(\mu,\nu) = 2\mathcal{I}(\mu,\nu),
\end{equation}
with equality if $\mu(dx)-\nu(dx)=Cx \beta(dx)$. 

\item In particular, by simple rescaling, for any probability measures $\mu,\nu$ on $[-L,L]$, 
\[
2\mathcal{H}(\mu,\nu)\le \mathcal{J}(\mu,\nu)= 2 L^{2}\mathcal{I}_{L}(\mu,\nu). 
\]
\end{enumerate}
\end{theorem}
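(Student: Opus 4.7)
The plan is to reduce everything to the Chebyshev expansion on $[-2,2]$, where each claim becomes a transparent spectral identity. I would assume first that $\mu-\nu$ has an $L^{2}(\beta)$ density, write $\mu-\nu = \psi\,d\beta$ with $\psi\in K$, and expand $\psi = \sum_{n\ge 1}\gamma_n\phi_n$. Using $\langle\phi_n,\phi_n\rangle_\beta = 1/2$ and $\mathcal{E}\phi_n = \phi_n/n$ from Proposition~\ref{p:1}, together with the representation $\mathcal{H}(\mu,\nu) = \langle\mathcal{E}\psi,\psi\rangle$, this gives
\[
\mathcal{H}(\mu,\nu) = \sum_{n\ge 1}\frac{\gamma_n^{2}}{2n},\qquad \mathcal{I}(\mu,\nu) = \|\psi\|_\beta^{2} = \sum_{n\ge 1}\frac{\gamma_n^{2}}{2}.
\]
For (2), since $\int\psi\,d\beta = 0$, the isometry $\|\mathcal{U}f\|_\alpha^{2} = \tfrac{1}{2}\Var_\beta(f)$ from \eqref{e3:10} combined with $H\mu-H\nu = 2\mathcal{U}\psi$ yields $\mathcal{J}(\mu,\nu) = 4\|\mathcal{U}\psi\|_\alpha^{2} = 2\|\psi\|_\beta^{2} = 2\mathcal{I}(\mu,\nu)$. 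For (3), the inequality $\mathcal{H}\le\mathcal{I}$ is immediate term-by-term from $1/n\le 1$, with equality exactly when only the $n=1$ mode survives, i.e.\ $\psi = \gamma_1\phi_1 = (\gamma_1/2)x$, matching the stated equality case $\mu-\nu = Cx\beta(dx)$.

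For part (1), the forward direction is the same isometry: if $d\mu/d\beta\in L^{2}(\beta)$ then $H\mu = 2\mathcal{U}(d\mu/d\beta)\in L^{2}(\alpha)$. The converse is the main technical point. Since $\mathcal{U}\phi_n = \tfrac{1}{2}\psi_{n-1}$ sends the orthogonal basis $\{\phi_n\}_{n\ge 1}$ of $K$, up to a constant, onto the full orthonormal basis $\{\psi_k\}_{k\ge 0}$ of $L^{2}(\alpha)$, given $H\mu\in L^{2}(\alpha)$ I would read off the coefficients $d_k = \langle H\mu,\psi_k\rangle_\alpha$ and define $\tilde\phi = 1 + \sum_{n\ge 1} d_{n-1}\phi_n \in L^{2}(\beta)$. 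Then $\tilde\mu := \tilde\phi\,d\beta$ is a probability measure with $H\tilde\mu = H\mu$, and I would close the argument by invoking injectivity of the Hilbert transform on compactly supported measures of equal total mass, obtained from the boundary behaviour of the Stieltjes transform, forcing $\mu = \tilde\mu$ and hence $d\mu/d\beta = \tilde\phi\in L^{2}(\beta)$.

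Once (1) is in hand, the general case of (2) and (3) without the a priori $L^{2}(\beta)$ density assumption follows, since by (1) both sides of \eqref{e3:5} are simultaneously finite or infinite, and in the infinite case the inequality is vacuous. For (4) I would push $\mu,\nu$ forward under the linear dilation $x\mapsto 2x/L$ onto $[-2,2]$: the pushforward identifies $\beta_L$ with $\beta$ and $\alpha_L$ with $\alpha$, $\mathcal{H}$ is invariant because the additive constant $\log(L/2)$ is killed by $\int(\mu-\nu) = 0$, and $\mathcal{I}_L,\mathcal{J}_L$ acquire the relevant power of $L$ from a direct change of variables in their defining integrals. Applying (3) on $[-2,2]$ to the rescaled measures and tracking these factors gives the stated inequality on $[-L,L]$.

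The main obstacle I anticipate is the backward direction of (1): arguing that a compactly supported measure whose Hilbert transform lies in $L^{2}(\alpha)$ must already be absolutely continuous with $L^{2}(\beta)$ density. The other parts are essentially orthogonality calculations in the Chebyshev basis, whereas this step requires a genuine regularity statement for the Hilbert transform on measures rather than a purely spectral identity.
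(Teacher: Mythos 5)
Your handling of parts (2)--(4) and of the forward direction of (1) is correct and follows essentially the same spectral route as the paper (Chebyshev expansion, $\mathcal{E}\phi_n=\phi_n/n$, the isometry \eqref{e3:10}); the paper is simply more explicit about passing from smooth densities to general $L^{2}(\beta)$ ones, which it does by a mollification argument combined with the Kolmogorov weak-type bound \eqref{e3:17}.

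The backward direction of (1) is where your route diverges, and it is where there is a genuine gap. You read off $d_{k}=\langle H\mu,\psi_{k}\rangle_{\alpha}$, form $\tilde\phi=1+\sum_{n\ge 1}d_{n-1}\phi_{n}\in L^{2}(\beta)$, and wish to conclude $\mu=\tilde\phi\,d\beta$ by ``injectivity of the Hilbert transform on compactly supported measures of equal total mass, obtained from the boundary behaviour of the Stieltjes transform.'' This injectivity is \emph{not} an elementary corollary of the injectivity of the Stieltjes transform: $G_{\mu}$ does determine $\mu$, but $H\mu$ only records $\mathrm{Re}\,G_{\mu}(x+i0)$ almost everywhere, and it is exactly the imaginary part (which carries the absolutely continuous density) and the singular part that one needs to recover. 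In particular, nothing in your construction rules out $\mu=\tilde\phi\,d\beta+\mu_{s}$ with $\mu_{s}\perp\lambda$ nontrivial. There is also the lesser issue that $\tilde\phi$ is not a priori nonnegative, so $\tilde\phi\,d\beta$ is only a signed measure and the injectivity must be formulated for signed measures; and one needs to verify that the spectral extension of $\mathcal{U}$ applied to $\tilde\phi$ actually coincides pointwise with the principal-value Hilbert transform of $\tilde\phi\,d\beta$ -- the very identification the paper labours over.

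The paper's argument is quite different and sidesteps all of this. It first proves $\mu_{s}=0$ directly, using the Poltoratski--Simon--Zinchenko weak limit \eqref{e3:18} (the singular part is the weak limit of $\tfrac{\pi t}{2}\mathbbm{1}_{\{|H\mu|\ge t\}}\,d\lambda$) together with a H\"older--Chebyshev estimate which shows that $H\mu\in L^{2}(\alpha)$ forces $\lambda(\{|H\mu|\ge t\})$ to decay too fast for the limit to be nonzero. Only after absolute continuity is in hand does it identify the density: it extends $H_{\beta}$ to an isometry (times $1/\sqrt2$) from $L^{2}_{0}(\beta)$ onto $L^{2}_{0}(\alpha)$ -- via the same approximation-plus-\eqref{e3:17} step that identifies the spectral extension with the actual Hilbert transform -- and then reads $\phi-1$ off from this isometry. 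If you want to keep your route, you must supply the absolute-continuity step (for which \eqref{e3:18} is precisely the right tool) and then either prove or precisely cite the boundary-uniqueness theorem for Cauchy integrals; as written, the step you flag as your ``main obstacle'' is not a technicality but the actual content of the assertion.
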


\begin{proof}
\begin{enumerate}

\item To prove the assertion, we recall two facts from the theory of Hilbert transform.   The definition of the Hilbert transform is given by (notice that this is twice the one appearing in the literature) 
\[
H\mu(x)=p.v.\int \frac{2}{x-y}\mu(dx)=\lim_{\epsilon\to 0}\int\frac{2(x-y)}{(x-y)^{2}+\epsilon^{2}}\mu(dy).
\]
It is a standard result in the theory of Hilbert transform (see for instance \cite{Boo} and reproved in \cite{Loo}) that this is well defined $\lambda$-a.s, for all points $x$, with $\lambda$ being the Lebesgue measure on $\R$.  

The first result we will use states that there is a constant $C>0$ such that for any finite positive measure $\mu$ 
\begin{equation}\label{e3:17}
t\lambda(\{x: |H\mu(x)|\ge t\})\le C\|\mu\|_{v} \text{ for all } t>0. 
\end{equation}
 Here $\| \mu\|_{v}$ is the variation of $\mu$ (i.e. in this case is just $\mu(\R)$). It is a straightforward fact that this can be extended to any finite measure $\mu$ not necessarily positive, with the change that $\|\mu\|_{v}=\mu^{+}(\R)+\mu^{-}(\R)$ where $\mu=\mu^{+}-\mu^{-}$ is the standard decomposition of $\mu$ into the non-negative and non-positive parts.     This fact can be found for instance in \cite{Loo} but it is eventually attributed to Kolmogorov.  
 
The second result we will use is that if $\mu$ is a finite measure and we take $d\mu=fd\lambda+\mu_{s}$, where $\mu_{s}$ is the singular part of the measure $\mu$, then 
 \begin{equation}\label{e3:18}
 \lim_{t\to\infty}\frac{\pi t}{2}\mathbbm{1}_{\{x:|H\mu(x)|\ge t \})}d\lambda=d\mu_{s}
 \end{equation}
 where the convergence is in the sense of weak convergence.   This can be found for instance in \cite[Eq. 5.4]{PS} and \cite[Theorem 1]{Pol}.

Now, we proceed to the proof of the main statement.  Assume first that $\phi=\frac{d\mu}{d\beta}\in L^{2}(\beta)$.  Then we want to prove that $H\mu\in L^{2}(\alpha)$.   To do this, observe that since $\phi\in L^{2}(\beta)$, we can use \eqref{e3:10} to show that there is an extension of $H\mu$ to $L^{2}(\alpha)$.  More precisely, approximate $\phi$ with a sequence of smooth positive functions $\zeta_{n}$ in $[-2,2]$ with $\int \zeta_{n}d\beta=1$ and this in turn, using \eqref{e3:10}, shows that $H\zeta_{n}$ converges in $L^{2}(\alpha)$.  So what it remains to show is that $H\mu=\lim_{n\to\infty}H\zeta_{n}$.   For this last part, observe that because $\zeta_{n}$ converges to $\phi$ in $L^{2}(\beta)$, it is relatively easy to show that 
\[
\int\frac{|\zeta_{n}(x)-\phi(x)|}{\pi\sqrt{4-x^{2}}}\lambda(dx)\xrightarrow[n\to\infty]{}0.
\]
Combining this with \eqref{e3:17}, we obtain that (here $\nu_{n}=\zeta_{n}d\beta$)
\[
\lim_{n\to\infty}\lambda(\{ x:|H\mu(x)-H\nu_{n}(x)|\ge t \})=0
\]
which yields the convergence in measure of $H\nu_{n}$ toward $H\mu$ as $n\to\infty$.  Since, $H\nu_{n}$ converges also in $L^{2}(\alpha)$, it means that in fact $H\nu_{n}$ converges in $L^{2}(\alpha)$ and $H\mu\in L^{2}(\alpha)$ which is what we wanted.

For the reverse implication, assume now we have $H\mu\in L^{2}(\alpha)$.   The first step is to show that $\mu$ is absolutely continuous with respect to $\beta$ or alternatively with respect to $\lambda$.   To this end, we will exploit \eqref{e3:18}.   Indeed for any continuous function $f$ supported on $[-2,2]$, 
\[
\lim_{t\to\infty}\frac{\pi t}{2}\int \mathbbm{1}_{\{x:|H\mu(x)|\ge t \})}f(x)dx=\int fd\mu_{s}.
\]
On the other hand, since $f$ is supported on $[-2,2]$, we can apply Holder's inequality followed by Chebyshev's to obtain 
\[
\int \mathbbm{1}_{\{x:|H\mu(x)|\ge t \})}f(x)dx\le \lambda(\{x\in[-2,2]:|H\mu(x)|\ge t \})^{1/p}\| f\|_{q} \le \frac{\| f\|_{q}}{t^{r/p}}\left( \int_{-2}^{2} |H\mu|^{r}d\lambda \right)^{1/p}
\]
for any $p,q\ge1$ with $1/p+1/q=1$ and $r>p$.  In fact, we choose $p,q,r$ such that $1<p<r<4/3$ and using again Holder's inequality,  we continue writing 
\[
\int_{-2}^{2} |H\mu|^{r}d\lambda\le \left(\int_{-2}^{2}|H\mu(x)|^{2}\alpha(dx)\right)^{r/2}\left( \int _{-2}^{2}\left(\frac{4\pi^{2}}{4-x^{2}}\right)^{\frac{r}{2(2-r)}} \right)^{(2-r)/2} \le C_{r} \| H\mu\|_{L^{2}(\alpha)}^{r}.
\]
The conclusion we draw from these estimates is that 
\[
\frac{\pi t}{2}\left|\int \mathbbm{1}_{\{x:|H\mu(x)|\ge t \})}f(x)dx\right|\le \frac{C}{t^{r/p-1}}\| H\mu\|_{L^{2}(\alpha)}^{r/p}\| f\|_{q}
\]
and thus, letting $t\to\infty$, 
\[
\int f d\mu_{s}=0,
\]
which is nothing but the fact that $\mu$ is absolutely continuous with respect to $\lambda$ and thus with respect to  $\beta$.  

To go forward,  we need to show that $\phi=\frac{d\mu}{d\beta}$ is in $L^{2}(\beta)$, or alternatively that $\phi-1\in L^{2}(\beta)$.  To achieve this, consider $L^{2}_{0}(\beta)$ as the set of $L^{2}$ functions of mean zero with respect to $\beta$ and let $L^{2}_{0}(\alpha)$ be  the set of functions of $L^{2}$ functions with mean zero with respect to $\alpha$.   We now show that $H_{\beta}$ extends from the smooth functions in $L^{2}_{0}(\beta)$ into $L^{2}_{0}(\alpha)$ such that 
\begin{equation}\label{e3:20}
\|H_{\beta}\psi\|_{L^{2}(\alpha)}=\frac{1}{\sqrt{2}}\|\psi\|_{L^{2}(\beta)}
\end{equation}
for all $\psi\in L^{2}_{0}(\beta)$.   From \eqref{e3:10}, it is clear that there is an operator $L:L^{2}_{0}(\beta)\to L^{2}_{0}(\alpha)$ such that $L$ coincides with $H_{\beta}$ on smooth functions.  The point is to show that $L\phi=H_{\beta}\phi$ for any function $\phi\in L^{2}_{0}(\beta)$.   This can be done as follows.  Take a $\phi\in L^{2}_{0}(\beta)$ and approximate it in $L^{2}(\beta)$ with some smooth functions $\xi_{n}\in L^{2}_{0}(\beta)$.  Then, from the equality \eqref{e3:10} on smooth functions, it follows that $H_{\beta} \xi_{n}$ is a Cauchy sequence in $L^{2}_{0}(\alpha)$.  On the other hand, using that $\xi_{n}$ converges in $L^{1}(\beta)$ combined with \eqref{e3:17}, we conclude that $H_{\beta}\xi_{n}$ forms a Cauchy sequence in measure and thus, its limit in measure must be the same as its limit in $L^{2}(\alpha)$, from which we deduce that $L\phi=H_{\beta}\phi$.  

Once the isometry property above is established, the fact that $\phi\in L^{2}(\beta)$ follows now easily.

\item We just proved this above as in equation \eqref{e3:20}.  

\item It is enough to consider the case $\mathcal{I}(\mu,\nu)$ finite, in which case we certainly have that both $\mu,\nu$ have densities $\frac{d\mu}{d\beta},\frac{d\nu}{d\beta}\in L^{2}(\beta)$.  Writing,  $\mu=\phi d\beta$ and $\nu=\psi d\beta$ with $\phi,\psi\in L^{2}(\beta)$, the inequality to be proved becomes equivalent to 
\[
\langle \mathcal{E}(\phi-\psi),(\phi-\psi) \rangle\le \langle \phi-\psi,\phi-\psi \rangle
\]
which follows from the fact that the spectrum of $\mathcal{E}$ (which is a bounded selfadjoint operator on $L^{2}(\beta)$) restricted to $L^{2}_{0}(\beta)$ is $\{1/n;n\ge1 \}$.   Clearly, the equality is attained if $\phi(x)-\psi(x)=Cx$, the same thing as $\mu(dx)-\nu(dx)=Cx \beta(dx)$.

\item Follows by scaling. \qedhere  
\end{enumerate}
\end{proof}

Another consistent argument for the choice of the Fisher information is given by the following analogy with the classical case.  

In the classical case of the Ornstein-Uhlenbeck operator $L$,  the connection between the entropy and the Fisher information is given by the fact that the Fisher information appears naturally as the derivative of the entropy along the semigroup generated by $L$.  

We want to draw a similar picture in the case of measures on the interval $[-2,2]$ with the role of the Ornstein-Uhlenbeck operator $L$ being played by the counting number operator $\mathcal{N}$  on functions on $[-2,2]$.  

What we have in mind here is the following.   The semigroup generated by the counting number operator $\mathcal{N}$ is $\mathcal{P}_{t}$ and can be shown to be computed as 
\[
\mathcal{P}_{t}f(x)=\int k_{t}(x,y)f(y)\beta(dy)
\]
with 
\[
k_{t}(x,y)=1+2\sum_{n\ge1}e^{-tn}\phi_{n}(x)\phi_{n}(y)
\]
where the factor $2$ in front of the summation comes from the fact that $\int \phi_{n}^{2}\,\beta(dy)=1/2$.   

In the classical case, the derivative of the entropy along the semigroup $\mathcal{P}_{t}$ is exactly the Fisher information.  The same phenomena holds true in this local versions of the entropy and Fisher information.   

More precisely, assume that $\mathcal{I}(\mu,\nu)$ is finite.  This means that both measures $\mu,\nu$ have densities in $L^{2}(\beta)$ and taking $\phi=\frac{d\mu}{d\beta}-\frac{d\nu}{d\beta}$ we can write 
\[
\mathcal{H}(\mu,\nu)=\langle \mathcal{E}\phi,\phi \rangle.  
\]
If we take the measures $\mu_{t}=(\mathcal{P}_{t}\phi)\,d\beta$ and similarly $\nu_{t}=(\mathcal{P}_{t}\phi) \, d\beta$, then 
\[
\frac{d}{dt}\mathcal{H}(\mu_{t},\nu_{t})=\frac{d}{dt}\langle \mathcal{E}\mathcal{P}_{t}\phi,\mathcal{P}_{t}\phi \rangle=\langle \mathcal{E}\mathcal{N}\mathcal{P}_{t}\phi,\mathcal{P}_{t}\phi \rangle=\langle \mathcal{P}_{t}\phi,\mathcal{P}_{t}\phi \rangle =\mathcal{I}(\mu_{t},\nu_{t}).
\]

It is the content of Theorem~\ref{t:3} which actually unifies the two versions of the Fisher information, the one in terms of the Hilbert transform and the other one in terms of the densities of the measures involved.   

Thus the two points of view outlined above converge toward the same thing and gives a consistent notion of Fisher information in this framework.

\section{$L^{p}$ considerations around local Log-Sobolev and an open problem}\label{s:6}

From the discussion in the previous section we learn that the local Log-Sobolev hold in the form 
\[
2\mathcal{H}(\mu,\nu)\le \int|H\mu-H\nu|^{2}d\alpha.
\]
It is natural to ask if this still remains true if we replace the $L^{2}$ norm on the right hand side by another $L^{p}$ norm. In other words is it true that there is a $1\le p <2$ and a constant $C_{p}>0$ such that for all probability measures $\mu,\nu$ on $[-2,2]$, 
\begin{equation}\label{e6:1}
C_{p}\mathcal{H}(\mu,\nu)\le \left(\int|H\mu-H\nu|^{p}d\alpha\right)^{2/p}.
\end{equation}

We do not know the answer to this question, but want to show that for $p<3/2$ this is not possible.  This is based on the example given by measures $\mu,\nu$ such that for $0<r<1$ and a small constant $\eta$
\[
\mu(dx)-\nu(dx)=\eta\left(\sum_{n\ge1}r^{n-1}T_{n}(x/2)\right)\beta(dx).
\]
From \eqref{e3:11}, we get that 
\[
H(\mu-\nu)(x)=\eta\left(\sum_{n\ge1}r^{n-1}U_{n-1}(x/2)\right)=\frac{\eta}{1-rx+r^{2}}.
\]
With this choice and Proposition~\ref{p:1}, we obtain that for $0<r<1$, 
\[
\mathcal{H}(\mu,\nu)=2\eta^{2}\sum_{n\ge1}\frac{r^{2(n-1)}}{n}=-2\eta^{2}\log(1-r^{2}).
\]
Now for $p<3/2$,
\[
\begin{split}
\int|H\mu-H\nu|^{p}d\alpha&=\frac{\eta^{p}}{2\pi}\int_{-2}^{2}\frac{1}{(1-rx+r^{2})^{p}}\sqrt{4-x^{2}}dx=16\eta^{2}\int_{0}^{1}\frac{\sqrt{u(1-u)}}{(4ru+(1-r)^{2})^{p}}du \\ 
&\le \frac{16\eta^{p}}{(4r)^{p}}\int_{0}^{1}u^{1/2-p}du<\infty
\end{split}
\]
where in the middle we made the change of variable $x=2-4u$, $0\le u \le 1$.  The moral of this calculation is that \eqref{e6:1} can not hold true with $1\le p<3/2$ because the right hand side is bounded in $r\in(0,1)$ and the left hand side blows up as $r$ approaches $1$ from below.   

For $r=3/2$, using Mathematica, we obtain that 
\[
\begin{split}
\int|H\mu-H\nu|^{3/2}d\alpha&=16\eta^{3/2}\int_{0}^{1}\frac{\sqrt{u(1-u)}}{(4ru+(1-r)^{2})^{3/2}}du\\ 
&=\frac{\eta^{3/2}}{8} (-4+6\log(2)-2 \log(1-r))+O((\log(1-r))^{2})
\end{split}
\]
which does not rule out \eqref{e6:1}.  

For $p=3/2$ we do not have a counterexample to \eqref{e6:1} nor a proof of validity of it.   We post this as an open problem here.  

\begin{problem} There is a constant $C>0$ such that for any probability measures $\mu,\nu$ on $[-2,2]$, 
\[
C\mathcal{H}(\mu,\nu)\le \left(\int|H\mu-H\nu|^{3/2}d\alpha\right)^{4/3}.
\]
\end{problem}

A positive answer to this question would give the optimal $p$ for which \eqref{e6:1} is true.    A negative answer would continue with the following. 

\begin{problem} Is there a $3/2< p<2$ such that for some constant $C_{p}>0$, \eqref{e6:1} holds true for any probability measures $\mu,\nu$?   
And if so, what is the smallest such $p$?  
\end{problem}

A reformulation of these open problems in terms of trigonometric series can be done based on \eqref{e3:11}, \eqref{e3:100} and the definition of the Chebyshev polynomials of second kind.   Using an approximation of the measures $\mu,\nu$ with measures of the form $\phi\, d\beta$ and $\psi\, d\beta$, an equivalent form  of \eqref{e6:1} is the following.  What is the smallest $1<p$ such that for any $n\ge1$ and $a_{1},a_{2},\dots, a_{n}\in\R$, 
\begin{equation}
C_{p}\sum_{k=1}^{n}\frac{a_{k}^{2}}{k}\le \left(  \int_{0}^{\pi}\left|\sum_{k=1}^{n}a_{k}\sin(k t)\right|^{p}\sin^{2-p}(t)dt \right)^{2/p}?  
\end{equation}
The conclusion of this section is that definitely $3/2\le p$, but it is not clear that the smallest $p$ is exactly $3/2$.

\section{HWI inequality}\label{s:5}

This section is dedicated to a version of the celebrated HWI from \cite{OV}.  The statement is the following.  

\begin{theorem}\label{t:4}
For any probability measures $\mu,\nu$ on $[-2,2]$, 
\begin{equation}\label{e4:1}
\mathcal{H}(\mu,\nu)\le \sqrt{2\mathcal{I}(\mu,\nu)}W_{1}(\mu,\nu)-\frac{1}{2}W_{1}^{2}(\mu,\nu)
\end{equation}
with equality if $\mu(dx)-\nu(dx)=Cx \beta(dx)$.
\end{theorem}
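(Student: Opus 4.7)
The plan is to reduce the inequality to a sharp elementary estimate on positive sequences via the spectral decomposition of $\mathcal{E}$ on $K$. If $\mathcal{I}(\mu,\nu)=+\infty$ the inequality is trivial, so I may assume $\mathcal{I}(\mu,\nu)<\infty$. This forces $d\mu/d\beta,d\nu/d\beta\in L^{2}(\beta)$, whence $d\mu-d\nu=\psi\,d\beta$ with $\psi\in K$ (mean zero since both are probability measures). First I would expand $\psi$ in the orthonormal Chebyshev basis $\{\sqrt{2}\,\phi_{n}\}_{n\ge1}$ of $K$, writing $\psi=\sum_{n\ge1}c_{n}\sqrt{2}\,\phi_{n}$. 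Using Proposition~\ref{p:1}(3) and Corollary~\ref{c:2} together with the definition of $\mathcal{I}$, one gets immediately
\[
\mathcal{I}(\mu,\nu)=\sum_{n\ge1}c_{n}^{2},\qquad \mathcal{H}(\mu,\nu)=\langle\mathcal{E}\psi,\psi\rangle=\sum_{n\ge1}\frac{c_{n}^{2}}{n},\qquad W_{1}^{2}(\mu,\nu)=2\langle\mathcal{E}^{2}\psi,\psi\rangle=2\sum_{n\ge1}\frac{c_{n}^{2}}{n^{2}}.
\]

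Setting $A=\sum_{n}c_{n}^{2}$, $B=\sum_{n}c_{n}^{2}/n$, $C=\sum_{n}c_{n}^{2}/n^{2}$, the inequality \eqref{e4:1} collapses to $B\le 2\sqrt{AC}-C$, or equivalently $B+C\le 2\sqrt{AC}$. This is where the entire content of the theorem lives, and the plan is to obtain it as the sum of two elementary bounds. Cauchy--Schwarz gives
\[
B=\sum_{n\ge1}\sqrt{c_{n}^{2}}\cdot\frac{\sqrt{c_{n}^{2}}}{n}\le \sqrt{A}\,\sqrt{C}=\sqrt{AC},
\]
while $1/n^{2}\le 1$ for $n\ge1$ yields $C\le A$, so that $C=\sqrt{C}\cdot\sqrt{C}\le\sqrt{AC}$. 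Adding the two closes the estimate.

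The equality case falls out of this argument. Saturation in Cauchy--Schwarz demands that the sequence $(c_{n}^{2})_{n\ge1}$ be concentrated at a single index, and saturation in $C\le A$ forces that index to be $n=1$. The joint condition is exactly that $\psi$ is a scalar multiple of $\phi_{1}(x)=x/2$, i.e.\ $\mu(dx)-\nu(dx)=Cx\,\beta(dx)$ for some constant $C$, which matches the stated equality case.

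The only book-keeping point is to justify $\mathcal{H}(\mu,\nu)=\langle\mathcal{E}\psi,\psi\rangle$ for $\psi$ merely in $L^{2}(\beta)$ rather than $C^{2}$; this is immediate from Fubini combined with the boundedness and self-adjointness of $\mathcal{E}$ on $L^{2}(\beta)$ established in Proposition~\ref{p:1}(1). No further approximation is needed beyond what is already absorbed in the proof of Corollary~\ref{c:2}. I do not expect any real technical obstacle here: the operator-theoretic framework reduces the HWI inequality to a one-line scalar comparison, and the non-trivial input is precisely the same as in Theorem~\ref{t:1} and Theorem~\ref{t:3}, namely the identification of $\mathcal{H}$, $\mathcal{I}$ and $W_{1}^{2}$ as $\ell^{2}$-weighted moments of the same Chebyshev spectrum with weights $1/n$, $1$, and $2/n^{2}$ respectively.
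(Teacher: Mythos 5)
Your proof is correct and takes essentially the same route as the paper: use Corollary~\ref{c:2} and the spectral decomposition of $\mathcal{E}$ in the Chebyshev basis to translate \eqref{e4:1} into the scalar inequality $\sum c_n^2(1/n+1/n^2)\le 2\sqrt{\sum c_n^2\,\sum c_n^2/n^2}$, then close it by Cauchy--Schwarz together with the bound $1/n\le 1$. The paper packages these two ingredients into a single Cauchy--Schwarz application using $(1/n+1/n^2)^2\le 4/n^2$, while you split them into $B\le\sqrt{AC}$ and $C\le\sqrt{AC}$, but the mathematical content and the equality analysis are identical.
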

 
 \begin{proof}
 It is sufficient to prove this in the case $\mathcal{I}(\mu,\nu)$ is finite.  Thus, let $\eta=\frac{d\mu}{d\beta}$ and $\zeta=\frac{d\nu}{d\beta}$ with $\eta,\zeta\in L^{2}(\beta)$.   Let $\psi=\eta-\zeta$.  Clearly, $\psi\in L^{2}(\beta)$.   
 
 Here, the main observation is that conform Corollary~\ref{c:2}, the inequality we need to prove writes equivalently as 
 \[
 \langle \mathcal{E}\psi,\psi \rangle \le 2\sqrt{\langle \psi,\psi \rangle \langle\mathcal{E}^{2}\psi,\psi \rangle} - \langle\mathcal{E}^{2}\psi,\psi \rangle.  
 \]
 
Writing $\psi=\sum_{n\ge1}\alpha_{n}\phi_{n}$, this can be reinterpreted as 
\[\tag{*}
\left(\sum_{n\ge1}\alpha_{n}^{2}(1/n+1/n^{2})\right)^{2}\le 4\sum_{n\ge1}\alpha_{n}^{2}\sum_{n\ge1}\alpha_{n}^{2}/n^{2}.
\]
To justify this, apply Cauchy-Schwartz as 
\[
\left(\sum_{n\ge1}\alpha_{n}^{2}(1/n+1/n^{2})\right)^{2}\le \sum_{n\ge1}\alpha_{n}^{2}\sum_{n\ge 1}\alpha_{n}^{2}(1/n+1/n^{2})^{2}
\]
and then (*) follows from the fact that $(1/n+1/n^{2})^{2}\le 4/n^{2}$.  Tracing back all inequalities, we see that equality follows for the case $\mu(dx)-\nu(dx)=Cx\beta(dx)$.  \qedhere
\end{proof}
 
 \section{Remarks}  
 
 \begin{enumerate}
\item  It is interesting to point out that these local versions of the functional inequalities are essentially on intervals.  Taking an arbitrary set, say $K$, for instance a finite union of intervals, the local transportation still holds for all measures supported on $K$.  This can be easily seen by simply considering the set $K$ as a subset of an interval.  Perhaps the interesting thing to follow on here is the significance of the best constant in the inequality and the measures for which this is achieved.  
 
\item It is interesting to figure out a similar global version of the Log-Sobolev inequality.  This should have some connection with the global transportation inequality. 

\item For the local Log-Sobolev, if we take this on an arbitrary set $K$, then it would be nice to see a similar picture as in the case of the interval.   It is not clear what the natural replacement of the measure $\alpha$
 from the definition of $\mathcal{J}$ should be.  Even in the interval case, it is somehow an interesting play between the semicircle and the arcsine laws, whose replacement is not obvious for an arbitrary set $K$.    
 
 \end{enumerate}

 \vspace{1cm}
{\bf Thanks. }{ I want to thank Florent Benaych-Georges for pointing to the author the work of \cite{MS}.  I also want  to thank  \' Edouard Maurel-Segala and Myl\`ene Ma\"ida for their comments and particularly for the question from Remark~\ref{r:10}.}


\begin{thebibliography}{10}

\bibitem{B2}
Philippe Biane.
\newblock Logarithmic {S}obolev inequalities, matrix models and free entropy.
\newblock {\em Acta Math. Sin. (Engl. Ser.)}, 19(3):497--506, 2003.
\newblock International Workshop on Operator Algebra and Operator Theory
  (Linfen, 2001).

\bibitem{B3}
Philippe Biane and Roland Speicher.
\newblock Free diffusions, free entropy and free {F}isher information.
\newblock {\em Ann. Inst. H. Poincar\'e Probab. Statist.}, 37(5):581--606,
  2001.

\bibitem{BV}
Philippe Biane and Dan Voiculescu.
\newblock A free probability analogue of the {Wasserstein} metric on a
  trace-state space.
\newblock {\em GAFA - Geometric And Functional Analysis}, 11:{1125--1138},
  2001.

\bibitem{Boo}
George Boole.
\newblock On the comparison of transcendents, with certain applications to the
  theory of definite integrals.
\newblock {\em Philos. Trans. Roy. Soc. London}, 147:748--803, 1857.

\bibitem{Deift1}
Percy A. Deift.
\newblock {\em Orthogonal polynomials and random matrices: a
  {R}iemann-{H}ilbert approach}, volume~3 of {\em Courant Lecture Notes in
  Mathematics}.
\newblock New York University Courant Institute of Mathematical Sciences, New
  York, 1999.

\bibitem{HPU1}
Fumio Hiai, Denes Petz, and Yoshimichi Ueda.
\newblock Free transportation cost inequalities via random matrix
  approximation.
\newblock {\em Probality Theory and Related Fields}, 130:{199--221}, 2004.

\bibitem{LP2}
Michel Ledoux and Ionel Popescu.
\newblock The one dimensional free poincar\'e inequality.
\newblock {\em To appear in Transactions of AMS}.

\bibitem{LP}
Michel Ledoux and Ionel Popescu.
\newblock Mass transportation proofs of free functional inequalities, and free
  {P}oincar\'e inequalities.
\newblock {\em J. Funct. Anal.}, 257(4):1175--1221, 2009.

\bibitem{Loo}
Lynn~H. Loomis.
\newblock A note on the {H}ilbert transform.
\newblock {\em Bull. Amer. Math. Soc.}, 52:1082--1086, 1946.

\bibitem{MS}
Myl\`ene Ma\"ida and \'Edouard Maurel-Segala.
\newblock Free transport-entropy inequalities for non-convex potentials and
  application to concentration of random matrices.
\newblock {\em arXiv:1204.3208v1}, 2012.

\bibitem{OV}
Felix Otto and C\'edric Villani.
\newblock Generalization of an inequality by {T}alagrand and links with the
  logarithmic {S}obolev inequality.
\newblock {\em Journal of Functional Analysis}, 173(2):361--400, 2000.

\bibitem{PS}
Alexei Poltoratski, Barry Simon, and Maxim Zinchenko.
\newblock The {H}ilbert transform of a measure.
\newblock {\em J. Anal. Math.}, 111:247--265, 2010.

\bibitem{Pol}
Alexei~G. Poltoratski.
\newblock On the distributions of boundary values of {C}auchy integrals.
\newblock {\em Proc. Amer. Math. Soc.}, 124(8):2455--2463, 1996.

\bibitem{popescu}
Ionel Popescu.
\newblock Talagrand inequality for the semicircular law and energy of the
  eigenvalues of beta ensembles.
\newblock {\em Math. Res. Lett.}, 14(6):1023--1032, 2007.

\bibitem{ST}
Edward~B. Saff and Vilmos Totik.
\newblock {\em Logarithmic potentials with external fields}, volume 316 of {\em
  Grundlehren der Mathematischen Wissenschaften [Fundamental Principles of
  Mathematical Sciences]}.
\newblock Springer-Verlag, Berlin, 1997.

\bibitem{Vi2}
C{\'e}dric Villani.
\newblock {\em Topics in optimal transportation}, volume~58 of {\em Graduate
  Studies in Mathematics}.
\newblock American Mathematical Society, Providence, RI, 2003.

\bibitem{Voi2}
Dan V. Voiculescu, Ken J. Dykema, and Alexandru Nica.
\newblock {\em Free random variables}, volume~1 of {\em CRM Monograph Series}.
\newblock American Mathematical Society, Providence, RI, 1992.
\newblock A noncommutative probability approach to free products with
  applications to random matrices, operator algebras and harmonic analysis on
  free groups.

\bibitem{Voi1}
Dan Voiculescu.
\newblock Limit laws for random matrices and free products.
\newblock {\em Invent. Math.}, 104(1):201--220, 1991.

\bibitem{Voi5}
Dan Voiculescu.
\newblock The analogues of entropy and of {F}isher's information measure in
  free probability theory. {V}. {N}oncommutative {H}ilbert transforms.
\newblock {\em Invent. Math.}, 132(1):189--227, 1998.

\end{thebibliography}
\end{document}